\documentclass{amsart}

\usepackage[english]{babel}

\usepackage[letterpaper,top=2cm,bottom=2cm,left=3cm,right=3cm,marginparwidth=1.75cm]{geometry}

\usepackage{amsmath}
\usepackage{amssymb}
\usepackage{graphicx}
\usepackage[pdftex]{pict2e}
\usepackage{enumitem}
\usepackage{tikz}

\newtheorem{theorem}{Theorem}[section] 
\newtheorem{lemma}[theorem]{Lemma}     
\newtheorem{corollary}[theorem]{Corollary}
\newtheorem{proposition}[theorem]{Proposition}

\theoremstyle{definition}
\newtheorem{example}[theorem]{Example}
\newtheorem{remark}[theorem]{Remark}
\newtheorem{Def}[theorem]{Definition}

\newcommand\lu{\mathbb{L}}

\newcommand\alge{\mathfrak{A}}
\newcommand\algeq{\mathfrak{Q}}
\newcommand\tr{\textnormal{ tr}}

\newcommand\oink{\mathcal O}

\newcommand\Ba{\mathbf B}
\newcommand\Ra{\mathbf R}
\newcommand\Da{\mathbf{D}}
\newcommand\Ha{\mathbf{H}}
\newcommand\Ea{\mathbf{E}}

\newcommand\cs{{}_{\clubsuit}}
\newcommand\sps{{}_{\spadesuit}}

\newcommand\bmattrix[4]{\left(\begin{array}{cc}#1&#2\\#3&#4\end{array}\right)}
\newcommand\sbmattrix[4]{\textnormal{\scriptsize$\left(\begin{array}{cc}#1&#2\\#3&#4\end{array}\right)$\normalsize}}

\title[Bass orders in division algebras]
{On Bass orders in division algebras}

\author{Luis Arenas-Carmona}

\begin{document}

\footnote{\emph{Keywords:}
Bass orders, maximal orders, Bruhat-Tits trees.\\
\emph{MSC (2020):} 11S45(Primary), 16H10, 16G30 (Secondary).}

\begin{abstract}
A result by F.-T. Tu dating from 2011 states that the orders in a local
matrix algebra of dimension four that can be written as a finite 
intersection of maximal orders are those having an Eichler Gorenstein 
closure. In a more recent paper, we gave a similar characterization for 
orders with a Bass Gorenstein closure. More precisely, we prove that
these orders are the ones that can be written as a ghost intersection,
i.e., the set of points defined over the base field of an intersection
of maximal orders that is defined over a field extension. Although we proved 
this result only for the local four-dimensional matrix algebra, the 
statement makes sense also for a local quaternion division algebra,
since the latter turns into a matrix algebra when we extend the base field
to an even degree extension. In this work, we prove an analog of our 
previous result in the latter setting.
\end{abstract}

\maketitle

\section{Introduction}\label{intro}

In all of this work, $K$ denotes a local field $K$ with ring of integers $\mathcal{O}_K$
and uniformizing parameter $\pi_K$.
We let $\alge$ be the unique quaternion division $K$-algebra, up to isomorphism, and write 
$\mathbf{1}=\mathbf{1}_{\alge}$ for its multiplicative identity. By a full
order $\Ra$ we mean an $\mathcal{O}_K$-order of maximal rank. The order
$\Ra$ is called Gorenstein when its codifferent, defined as
$$\mathrm{codiff}(\Ra)=\{a \in\alge| \tr(a\Ra)\subseteq\oink_K\}
\subseteq\alge,$$
is an invertible two-sided fractional $\Ra$-ideal. There are other
equivalent definitions of Gorenstein orders, see \cite[Prop. 24.2.3]{voight} 
for instance. The one that we use mostly is the following 
\cite[Prop. 24.2.15]{voight}:
\begin{quote}
    An order $\Ra$ is called Gorenstein if it cannot be written in the form
    $\Ra=\Ra_1^{[n]}:=\oink_K\mathbf{1}+\pi_K^{n}\Ra_1$, for any integer 
    $n\geq1$, and a full order $\Ra_1$.
\end{quote}
In fact, for any full order $\Ra'$ there is a largest integer $n$ such that we can 
write $\Ra'=\Ra^{[n]}$ as above. The order $\Ra$ is unique, and it is called the 
Gorenstein closure (sometimes Gorenstein saturation) of $\Ra'$. An analogous result,
replacing the power $\pi_K^n$ by an arbitrary ideal $I$, holds for Dedekind domains in global fields, but we do not need it here.
An order $\Ba$ is called Bass if every order $\Ra$ satisfying $\Ra\supseteq\Ba$ is 
Gorenstein.  Gorenstein and Bass orders are key to the classical classification of 
quaternion orders. See \cite{Brz82}, \cite{Brz83} and \cite[\S24.2]{voight}
for an account of this theory.

It has been known for some time that a full order in a local matrix algebra is the
intersection of a family of maximal orders if and only if its Gorenstein closure
is an Eichler order \cite{Brz83}. This is useful to study embeddings of arbitrary orders into 
Eichler orders, or other orders with an Eichler Gorenstein closure. This is done 
through the use of branches of orders. 
The Bruhat-Tits tree $\mathfrak{t}(K)$ is a tree whose
vertices correspond to the maximal orders in the matrix algebra 
$\mathbb{M}_2(K)$. The branch of an order 
$\Ha\subseteq\mathbb{M}_2(K)$ is the largest
subgraph $\mathfrak{s}(\Ha)$ of the Bruhat-Tits tree whose vertices correspond
to maximal orders containing $\Ha$. A typical branch is like the one depicted
in Fig. \ref{f1}. 
\begin{figure}
    \centering
    \setlength{\unitlength}{2mm}
    \begin{picture}(40,15)(-5,-3)
        \put(3,0){\line(1,0){26}}
        \put(5,0){\line(0,1){2}}
        \put(8,0){\line(0,1){5}}
        \put(8,3){\line(1,0){2}}
        \put(12,0){\line(0,1){5}}
        \put(12,3){\line(1,0){2}}
        \put(16,0){\line(0,1){5}}
        \put(16,3){\line(1,0){2}}
        \put(20,0){\line(0,1){5}}
        \put(20,3){\line(1,0){2}}
        \put(24,0){\line(0,1){5}}
        \put(24,3){\line(1,0){2}}
        \put(27,0){\line(0,1){2}}
        \put(11.5,-0.5){$\bullet$}
        \put(15.5,-0.5){$\bullet$}
        \put(19.5,0){$\sps$}
        \put(15.5,2.5){$\bullet$}
        \put(19.5,3){$\sps$}
        \put(23.5,2.5){$\bullet$}
        \put(19.5,5){$\sps$}
        \put(21.5,2.5){$\bullet$}
        \put(23.5,-0.5){$\bullet$}
        \put(26.5,-0.5){$\bullet$}
        \put(28.5,-0.5){$\bullet$}
        \put(26.5,1.5){$\bullet$}
        \put(25.5,2.5){$\bullet$}
        \put(23.5,4.5){$\bullet$}
        \put(17.5,2.5){$\bullet$}
        \put(15.5,4.5){$\bullet$}
        \put(13.5,2.5){$\bullet$}
        \put(11.5,4.5){$\bullet$}
        \put(9.5,2.5){$\bullet$}
        \put(7.5,4.5){$\bullet$}
        \put(7.5,2.5){$\bullet$}
        \put(11.5,2.5){$\bullet$}
        \put(7.5,2.5){$\bullet$}
        \put(11.5,2.5){$\bullet$}
        \put(7.5,-0.5){$\bullet$}
        \put(4.5,0){$\cs$}
        \put(2.5,0){$\cs$}
        \put(4.5,2){$\cs$}
        \put(23.5,-2){${}^{\Da_2}$}
        \put(7.5,-2){${}^{\Da_1}$}
    \end{picture}
    \caption{ The branch of an order $\Ha$.}\label{f1}
    \label{f2}
\end{figure}
In the figure, clubs and spades denote the 
maximal orders containing two Eichler orders
of level $2$ that contains the order $\Ha$. 
The relative position of these Eichler
orders is relevant, for example, when solving the selectivity problem, see 
\cite{Eichler2} for an account on that subject.

We proved in \cite{A26} that an order $\Ra$ in a matrix algebra 
can be written as 
a ghost intersection of maximal orders, i.e., as a finite intersection
$$\Ra=\alge\cap\bigcap_{j\in J}\Da_j,$$
where each $\Da_j$ in a maximal order is 
$\alge_L=\alge\otimes_KL$, for some finite
extension $L/K$, if and only if the Gorenstein closure
of $\Ra$ is a Bass order. This is interesting because 
embeddings of orders into
these ghost intersections can be studied via graph theory through the
theory of ghost branches that we developed in \cite{GB1}. The preceding 
characterization of Bass orders allows us to extend this 
graph theoretical technique
to that setting, and thus, for instance, potentially complete current computations of spinor images 
that are key to solving the selectivity problem. Having a similar result over division 
algebras can be useful in that direction, since we recently managed to extend
the theory of ghost branches to this setting via twisted trees. Essentially, a twisted
tree is a Bruhat-Tits tree with a modified Galois action having no invariant visual 
limits. See \cite{GB2} for details. Thus, the main result of this paper is the 
following:

\begin{theorem}\label{t1}
    A full order in a quaternion division algebra over a local field $K$ is a ghost 
    intersection of maximal orders if and only if its Gorenstein closure is a Bass 
    order.
\end{theorem}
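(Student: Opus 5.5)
The argument splits into the two implications. Both rest on the classical structure of orders in the division algebra $\alge$. Write $\oink_\alge$ for the unique maximal order and $\pi_\alge$ for a uniformizer with $\pi_\alge^2\in\pi_K\oink_\alge^*$. Every Gorenstein order that is not maximal is then Bass, and has the form $\oink_{L}+\pi_\alge^{k}\oink_\alge$ for a quadratic extension $L/K$ and some $k\ge1$, with $\oink_L\subseteq\oink_\alge$ the ring of integers of $L$. Conversely, every Bass order is of this form or is maximal. Concretely, I will use the description from \cite[\S24.2]{voight}: an order in $\alge$ is Bass if and only if it contains the maximal order of a quadratic field.

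For the direction ``ghost intersection $\Rightarrow$ Bass Gorenstein closure,'' suppose $\Ra=\alge\cap\bigcap_j\Da_j$ with each $\Da_j$ maximal in $\alge_L$, and take $L$ of even degree, so that $\alge_L\cong\mathbb{M}_2(L)$. The order $\Ra_L=\Ra\otimes\oink_L$ is contained in the intersection of the $\Da_j$. I would apply the matrix-algebra result of \cite{A26} over the completion of the relevant field. A cleaner route is to reduce to a structural fact. Write $\Ra=\Ra_1^{[n]}$ with $\Ra_1$ Gorenstein, and argue by contradiction: assume $\Ra_1$ is not Bass. Then $\Ra_1$ contains no maximal quadratic order. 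I would then show that the ghost intersection $\Ra'=\alge\cap\bigcap_j\Da_j$ is ``saturated'' in the following sense: if $x\in\alge$ and $\pi_K x\in\Ra'$ together with $x$ integral over $\oink_K$ and $x\in\oink_K+\Ra'\otimes\oink_L$-type conditions, then $x\in\Ra'$. The key claim is that a finite intersection of maximal orders over $\alge_L$ has Bass Gorenstein closure inside $\mathbb{M}_2(L)$, by \cite{A26} or by Tu's theorem, since finite intersections of maximal orders in the matrix algebra have Eichler Gorenstein closure. Taking Galois-fixed points should then preserve the property that the closure contains a maximal quadratic order. This is the step I expect to be the main obstacle. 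The issue is that the Gorenstein closure does not obviously commute with taking $K$-points. I would handle it by the explicit branch picture: the fixed points of Galois acting on the (twisted) tree, as in \cite{GB2}.

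For the converse, suppose $\Ra=\Ba^{[n]}$ with $\Ba$ Bass. If $\Ba=\oink_\alge$, I would choose a ramified quadratic extension $L$ in which the twisted tree has a fixed vertex, namely $\oink_\alge\otimes\oink_L$ up to saturation. I would then take all maximal orders at distance $\le m$ from that vertex. Their intersection is an order of the form $\oink_L+\pi_L^{m}\mathbb{M}_2(\oink_L)$-like, and its $K$-points should be $\oink_K+\pi_K^{n}\oink_\alge$ for a suitable $m$; this I would compute directly. If $\Ba=\oink_F+\pi_\alge^k\oink_\alge$ with $F$ quadratic, I would take $L\supseteq F$ of even degree. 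The orders $\oink_F\otimes\oink_L$ then fix a line or a vertex in the tree of $\mathbb{M}_2(L)$. Intersecting all maximal orders at distance $\le r$ from this fixed set gives an Eichler-like order over $L$, and I would compute its $\alge$-points. The condition $\pi_K^n$ is obtained by enlarging the tube radius by $n$ units, using the valuation $v_L(\pi_K)=e(L/K)$. The technical point is matching $k$ exactly. For this I would allow $L/K$ ramified of suitable index so that $\pi_\alge^k$ corresponds to an integral distance in the tree of $L$. Finiteness of the intersection follows because the tube of bounded radius around a finite (or Galois-compact) set is finite.
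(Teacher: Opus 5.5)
There is a genuine gap, and it is in the hard direction. For ``ghost intersection $\Rightarrow$ Bass Gorenstein closure'' you never state the ``saturation'' property precisely. Your claim that passing to Galois-fixed points preserves ``the closure contains a maximal quadratic order'' is essentially the theorem itself. It does not follow from Tu's result over $E$: the maximal commutative orders in the Eichler closure of $\bigcap_j\Da_j$ are typically split and not defined over $K$. And, as you note, Gorenstein closure does not commute with taking $K$-points.

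The missing idea is the retraction argument of Props.~\ref{p35} and~\ref{p36}. Fix a maximal order $\Da\supseteq\oink_\alge$ of $\alge_E$, so that $n(a_E,\Da)\le b(a)$ for all $a\in\alge_*$. Replace each $\Da_j$ by the vertex $\Da'_j$ on the path from $v_{\Da_j}$ to $v_{\Da}$ at distance $e=e(E/K)$ from $v_{\Da_j}$, or by $\Da$ if that vertex is closer. There are two cases:
\begin{itemize}
\item If $n(a_E,\Da'_j)\le b(a)$, then $n(a,\alge\cap\Da'_j)=0$ and $n(a,\alge\cap\Da_j)\le 1$.
\item If $n(a_E,\Da'_j)>b(a)\ge n(a_E,\Da)$, then $n(a_E,\cdot)$ is already increasing at $\Da'_j$ along the path, so $n(a_E,\Da_j)=n(a_E,\Da'_j)+e$. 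The bounds $e(n_1-1)+b<n\le en_1+b$ of Prop.~\ref{p25} turn this into $n(a,\alge\cap\Da_j)=n(a,\alge\cap\Da'_j)+1$.
\end{itemize}
Hence, if $\Ra\subseteq\oink_\alge^{[1]}$, i.e.\ $n(a,\Ra)\ge 1$ for all $a$, then $\Ra=\Ra_1^{[1]}$ with $\Ra_1=\alge\cap\bigcap_j\Da'_j$, again a ghost intersection. Iterating, the Gorenstein closure of a ghost intersection is a ghost intersection. A Gorenstein ghost intersection cannot lie in $\oink_\alge^{[1]}$, so it is Bass.

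Your abandoned first idea can also be completed. Take $M/K$ Galois containing $E$ and the unramified quadratic $L$, and replace $\{\Da_j\}$ by all its $\mathrm{Gal}(M/K)$-conjugates. \'Etale descent along $L/K$ gives $\Ra_L=\alge_L\cap\bigcap_{j,\sigma}\sigma(\Da_j\oink_M)$, so \cite[Th.~1.1]{A26} applies to $\Ra_L$. Gorenstein closure commutes with this base change, and the Bass property descends.

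There are two further problems. First, your opening claim that every non-maximal Gorenstein order of $\alge$ is Bass is false; if it were true, the theorem would say that every order is a ghost intersection. For odd residue characteristic, $\oink_K+\pi_K\oink_K\mathbf{i}+\pi_K\oink_K\mathbf{j}+\pi_K^2\oink_K\mathbf{i}\mathbf{j}$ is an order contained in $\oink_\alge^{[1]}$, hence not Bass. Yet it is Gorenstein: writing it as $\oink_K+\pi_K\Ra_1$ would force $\mathbf{i},\mathbf{j}\in\Ra_1$ by integrality of reduced traces, hence $\pi_K\mathbf{i}\mathbf{j}$ would lie in the order, which is false. Orders like this are exactly what the hard direction has to exclude.

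Second, in the converse your construction for $\Ba=\oink_F+\pi_\alge^k\oink_\alge$ fails as described. For $L\supseteq F$ with $F/K$ separable, $F\otimes_K L\cong L\times L$. So the maximal orders containing $(\oink_F)_L$ form a tube around a maximal path, which is infinite, contrary to your finiteness claim. The intersection of all maximal orders in such a tube has rank two, so its intersection with $\alge$ lies in $F$ and is not a full order. Asymmetric configurations are needed: in Example~\ref{e1} a single maximal order already gives $\alge\cap\Da_n=\oink_K[\pi_K^n\mathbf{i},\mathbf{w}]$.

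The paper avoids explicit constructions altogether. It shows $\Ba_L$ is Bass for the unramified quadratic $L$ (Lemma~\ref{l32}: a generator of a maximal quadratic order in $\Ba$ has an edge or a maximal path as branch over $L$). It then uses $(\Ra^{[r]})_L=(\Ra_L)^{[r]}$, applies \cite[Th.~1.1]{A26} to $\Ra_L$, and concludes with $\Ra=\alge\cap\Ra_L$ (Prop.~\ref{p34}).
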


An immediate consequence is the following result:

\begin{corollary}
    If $\{\Ra_i\}_{i\in I}$ is a finite family of orders having a Bass 
    Gorenstein closure, then the intersection $\bigcup_{i\in I}\Ra_i$ has the
    same property.\qed
\end{corollary}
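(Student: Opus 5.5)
The plan is to reduce the corollary to Theorem~\ref{t1} applied in both directions. (The union symbol in the statement should clearly be an intersection; I prove the statement for $\Ra=\bigcap_{i\in I}\Ra_i$.)

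First, let $\{\Ra_i\}_{i\in I}$ be a finite family of full orders in $\alge$, each with a Bass Gorenstein closure. By the ``if'' direction of Theorem~\ref{t1}, each $\Ra_i$ is a ghost intersection. So for each $i$ there are a finite extension $L_i/K$ and a finite family $\{\Da_{ij}\}_{j\in J_i}$ of maximal orders in $\alge_{L_i}$ such that
$$\Ra_i=\alge\cap\bigcap_{j\in J_i}\Da_{ij}.$$
Intersecting over $i$ gives
$$\Ra=\bigcap_{i\in I}\Ra_i=\alge\cap\bigcap_{i\in I}\bigcap_{j\in J_i}\Da_{ij}.$$
This is again a finite intersection, since $I$ and every $J_i$ are finite. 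The order $\Ra$ is full because it contains $\pi_K^N\Ra_{i_0}$ for a suitable $N$. Once $\Ra$ is known to be a ghost intersection, the ``only if'' direction of Theorem~\ref{t1} shows that its Gorenstein closure is Bass, which is the claim.

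The only real point is whether the display above is a ghost intersection in the sense of the definition, where all $\Da_j$ are required to be maximal orders of a single algebra $\alge_L$. If orders over different extensions are allowed, there is nothing more to do. Otherwise I would take $L$ to be a finite extension containing the compositum of all the $L_i$. I would then show that every maximal order $\Da$ of $\alge_{L_i}$ can be rewritten as
$$\Da=\alge_{L_i}\cap\bigcap_{k}\Da'_k,$$
with finitely many maximal orders $\Da'_k$ of $\alge_{L}$. Substituting these expressions and intersecting with $\alge\subseteq\alge_{L_i}$ would then give a ghost intersection over the single field $L$.

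This rewriting is the step I expect to be the main obstacle.

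\emph{Split case.} If $\alge_{L_i}\cong\mathbb{M}_2(L_i)$, then $\Da\cong\mathbb{M}_2(\oink_{L_i})$. Hence $\oink_L\Da\cong\mathbb{M}_2(\oink_L)$ is maximal in $\alge_L$. Since $\oink_L$ is a free $\oink_{L_i}$-module having $1$ in a basis, we get $\oink_L\Da\cap\alge_{L_i}=\Da$, so a single maximal order suffices.

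\emph{Division case.} If $\alge_{L_i}$ is still a division algebra, then $\Da$ is its unique maximal order. My first attempt would be to apply the result of \cite{A26} or Theorem~\ref{t1} over the base field $L_i$ instead of $K$. Since $\Da$ is maximal, it is Bass, so it is a ghost intersection over some extension of $L_i$. Enlarging $L$ to contain that extension as well, and using the split case to move from that extension up to $L$, handles this case too.

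Both directions of Theorem~\ref{t1} are used here over base fields other than $K$, and the statement permits this since it is formulated for an arbitrary local field. Making $L$ contain all these finitely many fields completes the argument.
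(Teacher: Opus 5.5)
Your proof is correct and follows the paper's intended argument: the corollary is recorded as an immediate consequence of Theorem~\ref{t1}, since a finite intersection of ghost intersections is again a ghost intersection (and $\bigcup$ is indeed a typo for $\bigcap$).

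Your common-field step is fine in the split case. In the division case, however, citing Theorem~\ref{t1} over $L_i$ as a black box does not guarantee that the resulting extension splits $\alge_{L_i}$, because the tautology $\Da=\alge_{L_i}\cap\Da$ already counts as a ghost intersection, so the appeal to the split case could loop. You can handle both cases at once by maximality: for any maximal order $\Da'$ of $\alge_L$ containing $\oink_L\Da$, the ring $\alge_{L_i}\cap\Da'$ is an order containing $\Da$, hence equal to $\Da$.
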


In general, we identify $\alge$ with a subset of $\alge_L$, for instance, when writing intersections. However, we still write $a_L\in\alge_L$ for an element $a\in \alge$.
The expression $a_L$ should be interpreted as \emph{$a$ as an element
of $\alge_L$}. This is relevant, for instance, in Prop. \ref{p25}.

\section{Quaternion algebras, branches and extensions}

In this section $\algeq$ is an arbitrary quaternion $K$-algebra, i.e., either the
matrix algebra $\alge_0\cong\mathbb{M}_2(K)$ or the unique quaternion division algebra
$\alge$, up to isomorphism. 
In either case we identify $K$ with 
$K\mathbf{1}\subset\algeq$, where 
$\mathbf{1}=\mathbf{1}_\algeq\in\algeq$ is the
multiplicative identity,
and write $\algeq_*=\alge\smallsetminus K$.
However, we still write scalar quaternions $a\mathbf{1}$
for emphasis. If the characteristic of $K$ is not $2$, 
the division algebra is defined by 
$$\alge=\left(\frac{\pi_K,\delta}{K}\right)=
K\left[\mathbf{i},\mathbf{j}\big|\mathbf{i}^2=\pi_K\mathbf{1},
\ \mathbf{j}^2=\delta\mathbf{1},\ \mathbf{i}\mathbf{j}=
-\mathbf{j}\mathbf{i}\right],$$
where $\delta$ is a unit of maximal quadratic defect.
For a field of characteristic $2$ it is defined instead by the formula 
$\alge=K\left[\mathbf{i},\mathbf{j}\big|\mathbf{j}^2+\mathbf{j}=z\mathbf{1},\ 
\mathbf{i}^2=\pi_K\mathbf{1},\ \mathbf{j}\mathbf{i}=(\mathbf{i}+\mathbf{1})\mathbf{j}\right]$, where $z\in K$ is 
not in the image of the Artin–Schreier map. In either case, the quaternion division
algebra $\alge$ has a unique maximal order $\mathcal{O}_{\alge}=\mathcal{O}_K[\mathbf{i},\mathbf{w}]$, where $\mathbf{w}$ is a generator of the ring of integers in $K[\mathbf{j}]$.
On the other hand, the matrix algebra $\alge_0$ has infinitely many maximal orders,
and they are naturally in correspondence with the vertices of the Bruhat-Tits tree.
The vertex corresponding to a maximal order $\Da$ is denoted $v_{\Da}$. The
vertices $v_{\Da}$ and $v_{\Da'}$ are neighbors if, in some basis, 
the corresponding orders have the form
$$\Da=\bmattrix{\mathcal{O}_K}{\mathcal{O}_K}{\mathcal{O}_K}{\mathcal{O}_K},
\qquad\Da'=\bmattrix{\mathcal{O}_K}{\mathcal{O}_K}{\pi_K\mathcal{O}_K}
{\mathcal{O}_K}.$$
The Bruhat-Tits tree is a homogeneous tree where every vertex has valency $q+1$,
if $q$ is the cardinality of the residue field of $K$.
For every element $a\in\alge_0$,
the branch $\mathfrak{s}(a)=\mathfrak{s}_K(a)$ is the 
largest subgraph whose vertices
correspond to maximal orders containing $a$. The branch of an order $\Ha$ can be
defined as $\mathfrak{s}(\Ha)=\bigcap_{a\in\Ha}\mathfrak{s}(a)$. 
The list of possible shapes for the branch of an element 
$a\in\alge_{0*}=\alge_0\smallsetminus K$ is rather
limited, so we give the full list below. All these branches
are connected and so are their intersections as they are intersections in a tree, where paths are unique. The 
branch of the order $\Ha^{[r]}:=\mathcal{O}_K+\pi_K^r\Ha$ is 
constructed adding to the branch $\mathfrak{s}=\mathfrak{s}(\Ha)$ every vertex at distance $r$ or less
\cite[Prop. 2.4]{Eichler2}.  This graph is called the $r$-tubular neighborhood 
$\mathfrak{s}^{[r]}$ of $\mathfrak{s}$. This fact can be
used to reduce computations to the case where $\mathcal{O}_K[a]$ is the maximal order of $K[a]$
whenever the latter is a semisimple algebra.
\begin{itemize}
    \item If $\mathcal{O}_K[a]$ is the ring of integer of an algebra  
    $K[a]\cong K\times K$, then $\mathfrak{s}(a)$ is a maximal path.
    \item If $\mathcal{O}_K[a]$ is the ring of integer of a unramified field, then
    $\mathfrak{s}(a)$ contains a unique vertex.
    \item If $\mathcal{O}_K[a]$ is the ring of integer of a ramified field, then
    $\mathfrak{s}(a)$ is an edge.
    \item If $K(a)$ contains a nilpotent element, then it
    is not semisimple. In this case $\mathfrak{s}(a)$ is a horoball. 
\end{itemize} 
    Above, a horoball is a connected subgraph $\mathfrak{h}$
    of $\mathfrak{t}(K)$ having leaves (valency one vertices)
    and being isomorphic as a graph to $\mathfrak{h}^{[1]}$.
    This property is enough to completely characterize horoballs, and this fact lies at the heart of the proof
    of \cite[Prop. 4.4]{Eichler2}. We call horoballs ``infinite foliages'' in that work.
    \begin{figure}
    \centering
    \setlength{\unitlength}{2mm}
    \begin{picture}(100,15)(0,-3)
        \put(55.5,-0.1){$\dots$}
        \put(54.7,-0.3){$\star$}
        \put(66,8){\textbf{D}}
        \put(58,0){\line(1,0){15}}
        \put(61,0){\line(0,1){10}}
        \put(61,5){\line(1,0){5}}
        \put(61,8){\line(1,0){2}}
        \put(64,5){\line(0,1){2}}
        \put(68,0){\line(0,1){5}}
        \put(68,3){\line(1,0){2}}
        \put(71,0){\line(0,1){2}}
        \put(14,8){\textbf{A}}
        \put(4.5,-0.1){$\dots$}
        \put(3.7,-0.3){$\star$}
        \put(21.5,-0.1){$\dots$}
        \put(23.5,-0.3){$\star$}
        \put(7,0){\line(1,0){14}}
        \put(10,0){\line(0,1){5}}
        \put(10,3){\line(1,0){2}}
        \put(14,0){\line(0,1){5}}
        \put(14,3){\line(1,0){2}}
        \put(18,0){\line(0,1){5}}
        \put(18,3){\line(1,0){2}}
        \put(9.5,-0.5){$\bullet$}
        \put(13.5,-0.5){$\bullet$}
        \put(17.5,-0.5){$\bullet$}
        \put(44,8){\textbf{C}}
        \put(39,0){\line(1,0){14}}
        \put(41,0){\line(0,1){2}}
        \put(44,0){\line(0,1){5}}
        \put(44,3){\line(1,0){2}}
        \put(48,0){\line(0,1){5}}
        \put(48,3){\line(1,0){2}}
        \put(51,0){\line(0,1){2}}
        \put(43.5,-0.5){$\bullet$}
        \put(47.5,-0.5){$\bullet$}
        \put(31,8){\textbf{B}}
        \put(27,0){\line(1,0){10}}
        \put(29,0){\line(0,1){2}}
        \put(32,0){\line(0,1){5}}
        \put(32,3){\line(1,0){2}}
        \put(35,0){\line(0,1){2}}
        \put(31.5,-0.5){$\bullet$}
    \end{picture}
    \caption{Possible branches $\mathfrak{s}(a)$. \textbf{A}-\textbf{C},
    show the tubular neighborhoods of a maximal path, a vertex and an
    edge with their stem vertices denoted by bullets. 
    \textbf{D} shows a horoball. We assume the residue field 
    has 2 elements. Stars denote visual limits.}
    \label{f2}
\end{figure}

When $\mathcal{O}_K[a]$ is not maximal in the semisimple algebra $K[a]$, 
the branch $\mathfrak{s}(a)$ is a tubular neighborhood of one of the preceding
shapes. More precisely, we write $\mathcal{O}_K[a]=
\mathcal{O}_{K[a]}^{[n]}$, where $\mathcal{O}_{K[a]}$ is the maximal order
in $K[a]$. In this case the maximal orders containing $\mathcal{O}_{K[a]}$
are called the stem orders of $\mathfrak{s}(a)$, and their corresponding
vertices are called stem vertices. The maximal orders
containing $a$, i.e., those corresponding to vertices in $\mathfrak{s}(a)$,
are precisely those corresponding to vertices at distance $n$
or less from a stem vertex.

Once intersections are added to the
list, only two more graph shapes need to be included. A tubular neighborhood of an 
arbitrary finite path, for full orders, or a tubular neighborhood of a ray for three dimensional orders,
see \cite{Eichler2}.

\begin{Def}\label{d1}
    Let $\algeq$ be either $\alge_0$ or $\alge$, let 
    $\mathfrak{L}\subset\algeq$ be a two-dimensional semisimple commutative 
    algebra whose unique maximal order is $\mathcal{O}_{\mathfrak{L}}$, let 
    $\mathbf{H}\subset\algeq$ be a full order,
    and let $\algeq_*=\algeq\smallsetminus K$, as before. 
    Let $n=n(\mathfrak{L},\Ha)$ be the 
    non-negative integer satisfying 
    $\mathfrak{L}\cap\Ha=\mathcal{O}_{\mathfrak{L}}^{[n]}$. For any semisimple 
    element $a\in\algeq_*$ and for any full 
    order $\mathbf{H}$,
    we write $n(a,\mathbf{H})$ for $n(K[a],\mathbf{H})$. We also set $m(a)=m$ for 
    the integer satisfying $\mathcal{O}_K[a]=\mathcal{O}_{K[a]}^{[m]}$. Note that
    $a\in\mathbf{H}$ is equivalent to $m(a)\geq n(a,\mathbf{H})$.
\end{Def}

\begin{proposition}
    Every element $a\in\alge_*$ is semisimple.
    Two full orders $\Ha$ and $\Ha'$ in $\algeq=\alge$ are equal if and only if
    $n(a,\Ha)=n(a,\Ha')$ for any element $a\in\algeq_*$.
\end{proposition}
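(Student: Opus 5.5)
For the first claim, I take $a\in\alge_*$ and look at $K[a]$. It is a commutative subalgebra of a division algebra, so it is a domain. It is finite dimensional over $K$, hence a field. Since $a\notin K$ and every maximal subfield of $\alge$ has degree $2$, $K[a]$ is a quadratic field extension of $K$. A field has no nonzero nilpotents, so $K[a]$ is a two-dimensional semisimple commutative algebra, i.e., $a$ is semisimple. In particular $n(a,\Ha)$ as in Definition \ref{d1} is defined for every full order $\Ha$ and every $a\in\alge_*$, and so is $m(a)$.

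For the second claim, one direction is trivial: if $\Ha=\Ha'$ then $K[a]\cap\Ha=K[a]\cap\Ha'$ for every $a$, so the integers $n(a,\Ha)$ and $n(a,\Ha')$ coincide. For the converse, I use the observation recorded at the end of Definition \ref{d1}: $a\in\Ha$ if and only if $m(a)\ge n(a,\Ha)$. Indeed, $a\in\Ha$ means $a\in K[a]\cap\Ha=\oink_{K[a]}^{[n]}$. The orders $\oink_{K[a]}^{[k]}$ form a strictly decreasing chain, and $\oink_K[a]=\oink_{K[a]}^{[m(a)]}$ is the smallest order containing $a$, so this inclusion is equivalent to $\oink_K[a]\subseteq\oink_{K[a]}^{[n]}$, i.e., to $m(a)\ge n$.

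Assuming $n(a,\Ha)=n(a,\Ha')$ for all $a\in\alge_*$, I get $\Ha\cap\alge_*=\Ha'\cap\alge_*$. It remains to compare the scalars. A full order satisfies $\Ha\cap K=\oink_K$: it contains $\oink_K\mathbf 1$, and its elements are integral over $\oink_K$, so $\Ha\cap K=\oink_K$. The same holds for $\Ha'$, hence $\Ha=\Ha'$.

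The only point requiring a little care is the equivalence $a\in\Ha\iff m(a)\ge n(a,\Ha)$. It depends on the $\oink_{K[a]}^{[k]}$ being all the orders of $K[a]$ and strictly nested, which is standard for quadratic extensions of local fields. Integrality of elements of an order handles the scalar part. I do not expect any real obstacle.
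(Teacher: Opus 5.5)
Your proof is correct and follows the paper's own argument. Both show that $K[a]$ is a field, use the last observation of Definition \ref{d1} to get $a\in\Ha\iff a\in\Ha'$ for every $a\in\alge_*$, and handle the scalars with $\Ha\cap K=\oink_K$. One small remark: $m(a)$ is only defined for integral $a$, but you do not need it, since $n(a,\Ha)=n(a,\Ha')$ already gives $K[a]\cap\Ha=K[a]\cap\Ha'$, which settles membership of every $a\in\alge_*$ at once.
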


\begin{proof}
    The first statement is well known, since every element
    $a\in\alge_*$ generates a field. 
    By the last statement in Def. \ref{d1}, the condition $n(a,\Ha)=n(a,\Ha')$
    implies that $a\in\mathbf{H}$ if and only if $a\in\mathbf{H}'$. The result
    follows since $K\cap\Ha=\mathcal{O}_K$ for every order $\Ha$.
\end{proof}

\begin{proposition}
    Let $\mathfrak{L}\subset\alge_0$ be a two dimensional semisimple 
    commutative algebra and let $m\geq0$ be arbitrary. 
    Then $n(\mathfrak{L},\mathbf{D})=0$, for a maximal order $\mathbf{D}$, if and only
    if $\mathbf{D}$ is a stem order of $\mathcal{O}_{\mathfrak{L}}^{[m]}$.
    Furthermore, if two neighboring maximal orders, $\mathbf{D}$ and $\mathbf{D}'$
    satisfy $n(\mathfrak{L},\mathbf{D})=n(\mathfrak{L},\mathbf{D}')$ then 
    $n(\mathfrak{L},\mathbf{D})=0$. If $n(\mathfrak{L},\mathbf{D})=n>0$, then
    $\mathbf{D}$ has a unique neighbor $\mathbf{D}'$ such that 
    $n(\mathfrak{L},\mathbf{D}')=n-1$. Any other neighbor $\mathbf{D}''$ satisfies 
    $n(\mathfrak{L},\mathbf{D}'')=n+1$.
\end{proposition}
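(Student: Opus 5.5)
The plan is to translate the quantity $n(\mathfrak{L},\Da)$ into the distance from $v_{\Da}$ to the set of stem vertices, and then read off every claim from the geometry of the Bruhat--Tits tree.

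\emph{Step 1: identify $n(\mathfrak{L},\Da)$ with a distance.} Let $\mathfrak{s}_0$ be the branch of $\mathcal{O}_{\mathfrak{L}}$. This is the set of stem vertices, and it is a maximal path, a single vertex, or an edge, according as $\mathfrak{L}\cong K\times K$, $\mathfrak{L}$ is an unramified field, or $\mathfrak{L}$ is a ramified field. Note that the stem orders of $\mathcal{O}_{\mathfrak{L}}^{[m]}$ are by definition the maximal orders containing $\mathcal{O}_{\mathfrak{L}}$, so they do not depend on $m$.

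By \cite[Prop. 2.4]{Eichler2}, the branch of $\mathcal{O}_{\mathfrak{L}}^{[r]}$ is the tubular neighborhood $\mathfrak{s}_0^{[r]}$. On the other hand, $\Da\supseteq\mathcal{O}_{\mathfrak{L}}^{[r]}$ holds if and only if $\mathfrak{L}\cap\Da\supseteq\mathcal{O}_{\mathfrak{L}}^{[r]}$. Since $\mathfrak{L}\cap\Da=\mathcal{O}_{\mathfrak{L}}^{[n]}$ and the orders $\mathcal{O}_{\mathfrak{L}}^{[r]}$ form a strictly decreasing chain, this is equivalent to $n\le r$. Hence
$$n(\mathfrak{L},\Da)=d(v_{\Da},\mathfrak{s}_0),$$
the distance from $v_{\Da}$ to the subgraph $\mathfrak{s}_0$. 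In particular $n(\mathfrak{L},\Da)=0$ exactly when $\Da$ is a stem order, which is the first claim.

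\emph{Step 2: distances from neighbors.} Since $\mathfrak{s}_0$ is a connected subgraph of a tree, each vertex $v\notin\mathfrak{s}_0$ has a unique nearest point $p\in\mathfrak{s}_0$, reached by a unique geodesic from $v$ to $p$. Suppose $d(v,\mathfrak{s}_0)=n>0$.
\begin{itemize}
\item The neighbor of $v$ on this geodesic is at distance $n-1$ from $\mathfrak{s}_0$.
\item Any other neighbor $w$ lies on the side of $v$ away from $p$, because removing $v$ separates $w$ from $p$. So the geodesic from $w$ to $\mathfrak{s}_0$ passes through $v$, giving distance $n+1$.
\end{itemize}
To justify the second bullet, note that every path from $w$ to $\mathfrak{s}_0$ passes through $v$. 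Otherwise the connectedness of $\mathfrak{s}_0$ would produce a cycle, or would show that $v$ is not separated from $\mathfrak{s}_0$ by its edge to $p$. This gives both the existence and uniqueness of the neighbor with value $n-1$ and the $n+1$ statement.

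\emph{Step 3: neighbors with equal values.} Neighboring vertices have distances to $\mathfrak{s}_0$ that differ by exactly $1$ whenever either distance is positive, by Step 2. So equal values force both values to be $0$. Here the value $0$ can occur at two neighbors because $\mathfrak{s}_0$ may be an edge or a path.

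\emph{Main obstacle.} The expected difficulty is Step 1, specifically the identification of $\mathfrak{L}\cap\Da$ with the order $\mathcal{O}_{\mathfrak{L}}^{[n]}$ for the minimal $n$. This requires knowing that every order of $\mathfrak{L}$ has the form $\mathcal{O}_{\mathfrak{L}}^{[k]}$. That holds because $\mathfrak{L}$ is two-dimensional: any order in $\mathfrak{L}$ has the form $\mathcal{O}_K+\pi_K^k\mathcal{O}_{\mathfrak{L}}$. This fact is implicit in Def.~\ref{d1}, so we may take it as given.
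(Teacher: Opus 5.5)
Your proof is correct and follows the same route as the paper: you identify $n(\mathfrak{L},\mathbf{D})$ with the distance from $v_{\mathbf{D}}$ to the connected, $m$-independent set of stem vertices, then read every claim off the geometry of the tree. The only difference is that you derive this identification explicitly from the tubular-neighborhood description of the branch of $\mathcal{O}_{\mathfrak{L}}^{[r]}$, whereas the paper's proof leaves it implicit.
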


\begin{proof}
    The first statement is immediate from the definition of stem order. 
    Note that the stem vertices are independent of $m$, and are the vertices
    of a path, which is a subtree. For any maximal order that is not a stem order
    of $\mathcal{O}_{\mathfrak{L}}^{[m]}$, there is a unique path connecting it
    to the closest stem vertex. This path goes through a unique neighbor,
    which might be a stem vertex itself. This neighbor is one unit closer to the
    set of stem vertices, while all the others are farther away. In particular,
    no neighbor is at the same distance.
\end{proof}

\begin{corollary}
    Let $\mathfrak{L}\subset\alge_0$ be a two dimensional semisimple 
    commutative algebra. Consider a sequence $\mathbf{D}_0,\mathbf{D}_1,\dots,\mathbf{D}_n$ of maximal orders
    satisfying the following conditions:
    \begin{itemize}
        \item $n(\mathfrak{L},\mathbf{D}_0)=0$,
        \item $\mathbf{D}_i$ and $\mathbf{D}_{i+1}$ are neighbors for $0\leq i\leq n-1$,
        and 
        \item $\mathbf{D}_i\neq\mathbf{D}_{i+2}$ for $0\leq i\leq n-2$.
    \end{itemize}
    Then, there is an integer $r$ with $0\leq r\leq n$ such that $n(\mathfrak{L},\mathbf{D}_i)=0$, 
    for $0\leq i\leq r$, and 
    $n(\mathfrak{L},\mathbf{D}_i)=i-r$, for $r\leq i\leq n$.
\end{corollary}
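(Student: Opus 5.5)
The corollary follows from the preceding proposition by induction on the index $i$. I will keep track of the function $f(i)=n(\mathfrak{L},\mathbf{D}_i)$. The preceding proposition gives three facts about $f$:
\begin{itemize}
\item[(a)] for neighbors, $f(i+1)=f(i)$ can only happen when $f(i)=0$;
\item[(b)] if $f(i)=k>0$, then exactly one neighbor of $\mathbf{D}_i$ has value $k-1$, and every other neighbor has value $k+1$;
\item[(c)] if $f(i)=0$, the neighbors have value $0$ or $1$.
\end{itemize}
Fact (c) holds because the value $0$ neighbors are the stem vertices adjacent to $\mathbf{D}_i$, while a non-stem neighbor $\mathbf{D}'$ has, by (b), a unique neighbor one step closer to the stem set. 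Since $\mathbf{D}_i$ is a stem neighbor of $\mathbf{D}'$, that closer vertex is $\mathbf{D}_i$, so $f=1$ at $\mathbf{D}'$.

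Let $r$ be the largest index with $f(0)=\dots=f(r)=0$; it exists since $f(0)=0$. I claim that $f(i)=i-r$ for every $i\geq r$, which I prove by induction on $i$. The base case $i=r$ is trivial. For $i=r+1$ (when $r<n$), maximality of $r$ gives $f(r+1)\neq0$, and (c) then forces $f(r+1)=1$.

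Now suppose $r+1\leq i<n$ and $f(i)=i-r=k>0$. By (b), $\mathbf{D}_{i+1}$ either has value $k+1$, or it is the unique neighbor $\mathbf{D}'$ of $\mathbf{D}_i$ with value $k-1$. I show the second case cannot occur. By the induction hypothesis, $\mathbf{D}_{i-1}$ is a neighbor of $\mathbf{D}_i$ with $f(i-1)=k-1$. This holds even when $i=r+1$, since then $f(r)=0=k-1$. Uniqueness in (b) therefore says $\mathbf{D}_{i-1}$ is the only neighbor of $\mathbf{D}_i$ with value $k-1$. If $f(i+1)=k-1$, we would get $\mathbf{D}_{i+1}=\mathbf{D}_{i-1}$, which contradicts the non-backtracking hypothesis $\mathbf{D}_{i-1}\neq\mathbf{D}_{i+1}$. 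Hence $f(i+1)=k+1=(i+1)-r$, which completes the induction. If instead $r=n$, the statement holds trivially.

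The only delicate step is the transition at $i=r+1$. There I need that a vertex of value $1$ has a unique neighbor of value $0$, which is exactly (b) with $k=1$, so it is covered by the argument above. In the geometric picture, once a non-backtracking path leaves the stem path, it moves away from it and never returns. This is the same argument as the distance-to-stem argument in the proof of the preceding proposition.
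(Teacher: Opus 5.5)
Your proof is correct and follows the paper's own argument: take $r$ maximal with $n(\mathfrak{L},\mathbf{D}_0)=\dots=n(\mathfrak{L},\mathbf{D}_r)=0$, get value $1$ at $\mathbf{D}_{r+1}$ because neighboring values differ by at most one and coincide only at $0$, then use uniqueness of the smaller-valued neighbor together with $\mathbf{D}_{i-1}\neq\mathbf{D}_{i+1}$. The only difference is that you write out in full the induction the paper leaves as ``iterations are analogous.''
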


\begin{proof}
    Since $\mathbf{D}_i$ and $\mathbf{D}_{i+1}$ are neighbors, it is clear that
    $n(\mathfrak{L},\mathbf{D}_i)$ and $n(\mathfrak{L},\mathbf{D}_{i+1})$ differ
    by $1$ at most, and they are equal if and only if they are both $0$. Let $r$
    be the largest integer such that $n(\mathfrak{L},\mathbf{D}_i)=0$ for
    $0\leq i\leq r$. If $r=n$, there is nothing to prove. If $r<n$, then
    $n(\mathfrak{L},\mathbf{D}_{r+1})=1$. If $r<n-1$, we must have
    $n(\mathfrak{L},\mathbf{D}_{r+2})=2$ since there is exactly one neighbor $\Da$
    of $\Da_{r+1}$ satisfying $n(\mathfrak{L},\Da)=0$, and it is 
    $\Da=\Da_r$. Iterations are analogous.
\end{proof}

\begin{proposition}\label{p25}
    Let $E/K$ be a finite extension and let $e=e(E/K)$ be its ramification degree.
    Let $a\in\algeq_*$ be a semisimple element whose image 
    $a_E\in \algeq_E$ is also semisimple. 
    Then, we have $m(a_E)=em(a)+b$
    for some constant $b=b(\mathfrak{L})$ depending 
    only on the isomorphism class of $\mathfrak{L}=K[a]$. Furthermore, if 
    $n=n(a_E,\mathbf{S})$ and $n_1=n(a,\algeq\cap\mathbf{S})$,
    for a full order $\mathbf{S}\subset\algeq_E$, then $e(n_1-1)+b<n\leq en_1+b$.
\end{proposition}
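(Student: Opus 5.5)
The plan is to reduce both statements to a computation inside the commutative algebras $\mathfrak{L}=K[a]$ and $\mathfrak{L}_E=\mathfrak{L}\otimes_KE=E[a_E]$. The hypothesis that $a_E$ is semisimple means $\mathfrak{L}_E$ is a two-dimensional semisimple commutative $E$-algebra, so it has a unique maximal order $\mathcal{O}_{\mathfrak{L}_E}$. The key structural input is that every $\mathcal{O}_E$-order of $\mathfrak{L}_E$ has the form $\mathcal{O}_{\mathfrak{L}_E}^{[k]}=\mathcal{O}_E+\pi_E^k\mathcal{O}_{\mathfrak{L}_E}$ for a unique $k\geq0$. This is the same fact used over $K$ in Def. \ref{d1}, since such orders are determined by their conductor in a rank-two algebra over a DVR.

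\textbf{Step 1: the formula for $m(a_E)$.} Write $\mathcal{O}_K[a]=\mathcal{O}_{\mathfrak{L}}^{[m]}$ with $m=m(a)$. The order $\mathcal{O}_E\otimes_{\mathcal{O}_K}\mathcal{O}_{\mathfrak{L}}$ is an $\mathcal{O}_E$-order of $\mathfrak{L}_E$, so I define $b=b(\mathfrak{L})$ by
$$\mathcal{O}_E\otimes\mathcal{O}_{\mathfrak{L}}=\mathcal{O}_{\mathfrak{L}_E}^{[b]}.$$
By construction $b$ depends only on the isomorphism class of $\mathfrak{L}$ (and on $E$, which is fixed). Then
$$\mathcal{O}_E[a_E]=\mathcal{O}_E\otimes\mathcal{O}_K[a]=\mathcal{O}_E+\pi_K^m\bigl(\mathcal{O}_E+\pi_E^b\mathcal{O}_{\mathfrak{L}_E}\bigr)=\mathcal{O}_E+\pi_E^{em+b}\mathcal{O}_{\mathfrak{L}_E},$$
using $\pi_K\mathcal{O}_E=\pi_E^e\mathcal{O}_E$. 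Hence $m(a_E)=em(a)+b$.

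\textbf{Step 2: the bound on $n$.} Set $\mathbf{H}=\algeq\cap\mathbf{S}$. Then $\mathfrak{L}\cap\mathbf{H}=\mathfrak{L}\cap(\mathfrak{L}_E\cap\mathbf{S})=\mathfrak{L}\cap\mathcal{O}_{\mathfrak{L}_E}^{[n]}$, and by definition this equals $\mathcal{O}_{\mathfrak{L}}^{[n_1]}$. Take any $x\in\mathfrak{L}\smallsetminus K$. Then $K[x]=\mathfrak{L}$ and $E[x_E]=\mathfrak{L}_E$, so Step 1 applies to $x$ with the same constant $b$. Using the last sentence of Def. \ref{d1} (now over $E$), $x\in\mathcal{O}_{\mathfrak{L}_E}^{[n]}$ holds iff $m(x_E)\geq n$, i.e. iff $em(x)+b\geq n$. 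Also $m(x)$ ranges over all integers $t\geq0$ as $x$ varies, and $x\in\mathcal{O}_{\mathfrak{L}}^{[t]}$ iff $m(x)\geq t$. Therefore $n_1$ is the least integer $t\geq0$ with $et+b\geq n$. Minimality immediately gives $n\leq en_1+b$, and, when $n_1\geq1$, also $e(n_1-1)+b<n$.

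\textbf{Main obstacle.} The delicate point is the boundary case $n_1=0$. There the lower bound reads $b-e<n$, and since $n\geq 0$ this amounts to showing $b<e$, i.e. $\pi_K\mathcal{O}_{\mathfrak{L}_E}\subseteq\mathcal{O}_E\otimes\mathcal{O}_{\mathfrak{L}}$. I would try first to prove this by a discriminant/conductor comparison: the discriminant of $\mathcal{O}_E\otimes\mathcal{O}_{\mathfrak{L}}$ is the extension of $\mathrm{disc}(\mathcal{O}_{\mathfrak{L}})$, which has valuation at most $1+2v_K(2)$. This bound should be checked case by case for unramified, ramified, and split $\mathfrak{L}$. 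If $b\geq e$ can actually occur in wild cases, the plan is to restrict the lower inequality to $n_1\geq1$, or to read it with the convention that $n_1$ is the minimum above. All remaining steps are formal.
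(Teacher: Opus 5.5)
Your Steps 1 and 2 are correct and follow the paper's proof. You use the same constant $b$, defined by $\mathcal{O}_E\otimes\mathcal{O}_{\mathfrak{L}}=\mathcal{O}_{\mathfrak{L}_E}^{[b]}$. You use the same relation $\pi_K\mathcal{O}_E=\pi_E^e\mathcal{O}_E$ for $m(a_E)=em(a)+b$. And you bound $n$ the same way, by testing elements of $\mathfrak{L}$ with prescribed $m(\cdot)$. Your description of $n_1$ as the least $t\geq0$ with $et+b\geq n$ is a cleaner form of the paper's argument with $a'$ and $a''$. It is also exactly the form in which the result is used in Props.~\ref{p35} and~\ref{p36}.

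The boundary case you flagged cannot be closed by proving $b<e$, because that inequality is false for dyadic $K$. Consequently the lower bound in the statement is itself false when $n_1=0$.

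Your discriminant comparison, carried out, gives $2b=e\,d_K-d_E$, where $d_K$ and $d_E$ are the discriminant valuations of $\mathcal{O}_{\mathfrak{L}}$ and $\mathcal{O}_{\mathfrak{L}_E}$.
\begin{itemize}
\item For non-dyadic $K$ one has $d_K\leq1$, hence $b\leq\lfloor e/2\rfloor<e$, and the full statement holds.
\item For dyadic $K$, $d_K$ can reach $2v_K(2)+1\geq3$. When $\mathfrak{L}_E$ splits ($d_E=0$) this gives $b=e\,d_K/2>e$.
\end{itemize}

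Concretely, take $K=\mathbb{Q}_2$, $\mathfrak{L}=\mathbb{Q}_2(\sqrt2)\subset\algeq$ and $E=\mathbb{Q}_2(\sqrt2)$, so $e=2$. Then $a=\sqrt2$ becomes $(\sqrt2,-\sqrt2)\in E\times E$. This gives $\mathcal{O}_E[a_E]=\mathcal{O}_E+2\sqrt2\,\mathcal{O}_{\mathfrak{L}_E}$, and hence $b=3$. Now let $\mathbf{S}$ be any maximal order of $\algeq_E$ containing $\mathcal{O}_{\mathfrak{L}_E}$. Then $n=0$, and also $n_1=0$ because $\mathcal{O}_{\mathfrak{L}}\subseteq\mathbf{S}$. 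The claimed inequality $e(n_1-1)+b<n$ then reads $1<0$.

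Two smaller points. First, your reformulation ``$b<e$, i.e.\ $\pi_K\mathcal{O}_{\mathfrak{L}_E}\subseteq\mathcal{O}_E\otimes\mathcal{O}_{\mathfrak{L}}$'' is off by one: that inclusion is equivalent to $b\leq e$, and in the example above both fail. Second, the paper's own proof has the same blind spot, since its element $a''$ with $m(a'')=n_1-1$ exists only when $n_1\geq1$.

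So your fallback is the right resolution: the lower bound holds whenever $n_1\geq1$, and unconditionally for non-dyadic $K$. That is all the proof of Th.~\ref{t1} requires.
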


\begin{proof}
    Since every order in $\mathfrak{L}_E$ has the form 
    $\mathcal{O}_{\mathfrak{L}_E}^{[s]}$, for some $s\in\mathbb{Z}_{\geq0}$, we can 
    define $b$ by the relation $(\mathcal{O}_{\mathfrak{L}})_E=
    \mathcal{O}_{\mathfrak{L}_E}^{[b]}$. The first statement follows since 
    $\pi_K=u\pi_E^e$ for some unit $u\in\mathcal{O}_E^*$. For the second statement,
    note that $K[a]$ has integral generators $a'$ and $a''$ satisfying $m(a')=n-1$
    and $m(a')=n$. This means that $a'\in\algeq\cap\mathbf{S}$, while $a''\notin\algeq\cap\mathbf{S}$. This implies $a_E'\in\mathbf{S}$ and
    $a_E''\notin\mathbf{S}$. The result follows.

\end{proof}

\section{Proof of the main result}

In this section we collect the main tools that we need
to prove Th.~\ref{t1}. 

\begin{lemma}
    A full order $\Ba\subseteq\alge$ is Bass if and only if $\Ba\not\subset
    \mathcal{O}_{\alge}^{[1]}$.
\end{lemma}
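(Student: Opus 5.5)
The plan is to prove both implications directly from the characterization of Gorenstein orders recalled in the introduction: $\Ra$ is Gorenstein if and only if it cannot be written as $\oink_K\mathbf{1}+\pi_K^n\Ra_1$ with $n\geq1$ and $\Ra_1$ a full order. The second ingredient is the fact from the previous section that $\alge$ has a unique maximal order $\mathcal{O}_{\alge}$, so every full order of $\alge$ is contained in it.

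First, suppose $\Ba\subseteq\mathcal{O}_{\alge}^{[1]}=\oink_K\mathbf{1}+\pi_K\mathcal{O}_{\alge}$. The order $\mathcal{O}_{\alge}^{[1]}$ is a full order, since $\mathcal{O}_{\alge}$ is one. It has the form $\Ra_1^{[1]}$ with $\Ra_1=\mathcal{O}_{\alge}$, so by the characterization above it is not Gorenstein. It contains $\Ba$, so $\Ba$ has a non-Gorenstein overorder and is therefore not Bass.

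Conversely, suppose $\Ba\not\subset\mathcal{O}_{\alge}^{[1]}$, and let $\Ra\supseteq\Ba$ be any order. Assume for contradiction that $\Ra$ is not Gorenstein, so $\Ra=\oink_K\mathbf{1}+\pi_K^n\Ra_1$ for some $n\geq1$ and some full order $\Ra_1$. Since $\mathcal{O}_{\alge}$ is the unique maximal order, $\Ra_1\subseteq\mathcal{O}_{\alge}$. Hence
$$\pi_K^n\Ra_1\subseteq\pi_K^n\mathcal{O}_{\alge}\subseteq\pi_K\mathcal{O}_{\alge},$$
where the last inclusion uses $n\geq1$ and $\pi_K\in\mathcal{O}_{\alge}$. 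It follows that $\Ba\subseteq\Ra\subseteq\oink_K\mathbf{1}+\pi_K\mathcal{O}_{\alge}=\mathcal{O}_{\alge}^{[1]}$, a contradiction. Therefore every overorder of $\Ba$ is Gorenstein, and $\Ba$ is Bass.

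No step is a genuine obstacle. The only point to check is that every full order in $\alge$ lies in $\mathcal{O}_{\alge}$; this holds because integral elements of the division algebra $\alge$ are exactly those of $\mathcal{O}_{\alge}$, which is the uniqueness of the maximal order already stated in the paper.
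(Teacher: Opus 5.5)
Your proof is correct and follows the paper's argument: $\mathcal{O}_{\alge}^{[1]}$ is non-Gorenstein, and any non-Gorenstein overorder lies in $\mathcal{O}_{\alge}^{[1]}$ because every order is contained in the unique maximal order $\mathcal{O}_{\alge}$. The only cosmetic difference is that you treat a general exponent $n\geq1$ explicitly via $\pi_K^n\Ra_1\subseteq\pi_K\mathcal{O}_{\alge}$, whereas the paper argues contrapositively and writes the non-Gorenstein overorder directly as $\Ra^{[1]}$.
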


\begin{proof}
    If $\Ba\subseteq\mathcal{O}_{\alge}^{[1]}$, then $\Ba$ is not Bass by
    definition, since $\mathcal{O}_{\alge}^{[1]}$ is not Gorenstein. On the
    other hand, if $\Ba$ is not Bass, then $\Ba\subset\Ra^{[1]}$ for
    some order $\Ra$, but $\Ra^{[1]}\subseteq\mathcal{O}_{\alge}^{[1]}$,
    since $\Ra\subseteq\mathcal{O}_{\alge}$.
\end{proof}

\begin{lemma}\label{l32}
    Let $L/K$ be the unique unramified quadratic extension.
    If $\Ba\subseteq\alge$ is a Bass order, the $\mathcal{O}_L$-order $\Ba_L\subset
    \alge_L$ generated by $\Ba$ is a Bass order.
\end{lemma}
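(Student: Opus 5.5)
The plan is to argue through branches in the Bruhat--Tits tree of $\alge_L\cong\mathbb{M}_2(L)$, using the preceding lemma that $\Ba$ is Bass if and only if $\Ba\not\subseteq\mathcal{O}_{\alge}^{[1]}$. First I would record that $\Ba_L=\mathcal{O}_L\Ba$ is a full $\mathcal{O}_L$-order in $\alge_L$. I would then use the known characterization of Bass orders in the local matrix algebra: an order is Bass if and only if it contains a commutative order $\mathcal{O}_{\mathfrak{L}'}$ that is maximal in a semisimple two-dimensional subalgebra, or equivalently, if and only if its branch is not a tubular neighborhood $\mathfrak{s}^{[r]}$ with $r\geq1$. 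Concretely, an order $\Ra$ in $\mathbb{M}_2(L)$ fails to be Bass exactly when $\Ra\subseteq\Ra_1^{[1]}$ for some order $\Ra_1$. By \cite[Prop. 2.4]{Eichler2}, this forces the branch $\mathfrak{s}(\Ra)$ to contain the $1$-tubular neighborhood of a nonempty subtree. So it suffices to exhibit an element of $\Ba_L$ whose branch is thin: an edge, a single vertex, or a path with no tubular thickening.

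The key step is to choose $a\in\Ba\smallsetminus\mathcal{O}_K\mathbf{1}$ with $a\notin\mathcal{O}_{\alge}^{[1]}$. Such an $a$ exists because $\Ba\not\subseteq\mathcal{O}_{\alge}^{[1]}$. The condition means $\mathcal{O}_K[a]$ is not of the form $\mathcal{O}_K+\pi_K\mathcal{O}_K[a']$ with $a'$ integral, i.e. $m(a)=0$, so $\mathcal{O}_K[a]=\mathcal{O}_{K[a]}$. Indeed, if $m(a)\geq1$ then $a\in\mathcal{O}_K+\pi_K\mathcal{O}_{K[a]}\subseteq\mathcal{O}_{\alge}^{[1]}$. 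Hence $K[a]$ is a quadratic field with $\mathcal{O}_K[a]$ its maximal order. Now apply Prop.~\ref{p25} with $E=L$, where $e=1$. There are two cases.

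If $K[a]$ is ramified, then $L\otimes_K K[a]$ is a ramified quadratic field extension of $L$. The constant $b$ is $0$, because an Eisenstein generator stays Eisenstein over the unramified extension $L$. So $\mathcal{O}_L[a_L]$ is maximal in a ramified field, and its branch is an edge.

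If $K[a]\cong L$ is unramified, then $L\otimes_K K[a]\cong L\times L$. Again $b=0$, since $\mathcal{O}_L\otimes\mathcal{O}_L\cong\mathcal{O}_L\times\mathcal{O}_L$ by étaleness. So $\mathcal{O}_L[a_L]$ is the maximal order of $L\times L$, and its branch is a maximal path.

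In both cases $\mathfrak{s}(\Ba_L)\subseteq\mathfrak{s}(a_L)$ contains no vertex together with all of its neighbors. In the edge case this is immediate; in the path case the valency in the branch is at most $2<q^2+1$. Since every $1$-tubular neighborhood of a nonempty subtree contains some vertex with all its neighbors, $\Ba_L\not\subseteq\Ra_1^{[1]}$ for every $\Ra_1$. The same argument applies to any order $\Ra\supseteq\Ba_L$, because $\mathfrak{s}(\Ra)\subseteq\mathfrak{s}(\Ba_L)$. Therefore every overorder of $\Ba_L$ is Gorenstein, i.e., $\Ba_L$ is Bass.

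The main obstacle I expect is making rigorous the link between ``$\Ra\subseteq\Ra_1^{[1]}$'' and ``branch contains a $1$-tubular neighborhood.'' The difficulty is that $\Ra_1$ need not be an intersection of maximal orders, so $\mathfrak{s}(\Ra_1)$ could a priori be empty. My first approach would be to replace $\Ra_1$ by a maximal order $\Da\supseteq\Ra_1$: then $\Ra\subseteq\Da^{[1]}$, and $\mathfrak{s}(\Da^{[1]})$ is the ball of radius $1$ around $v_{\Da}$. As a fallback, I would check directly that $a_L\in\Da^{[1]}$ would force $m(a_L)\geq1$ relative to $\Da\cap L[a_L]$. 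That contradicts maximality of $\mathcal{O}_L[a_L]$, which comes from $b=0$.
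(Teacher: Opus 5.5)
Your proof is correct and follows the paper's argument. You pick $a\in\Ba$ outside $\mathcal{O}_{\alge}^{[1]}$, so that $\mathcal{O}_K[a]$ is maximal in $K[a]$, and observe that the branch of $a_L$ is an edge (ramified case) or a maximal path (unramified case, where $L[a_L]\cong L\times L$). The differences are only in execution. You check maximality of $\mathcal{O}_L[a_L]$ by \'etaleness and Eisenstein polynomials rather than by normalizing the generator and computing eigenvalues. You also replace the citation of \cite[Th. 1.2]{A26} with the valid direct observation that a non-Gorenstein overorder $\Ra\supseteq\Ba_L$ would satisfy $\Ra\subseteq\Ra_1^{[1]}\subseteq\Da^{[1]}$ for a maximal order $\Da$, which would force $\mathfrak{s}(a_L)$ to contain a vertex together with all of its neighbors.
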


\begin{proof}
    Since $\Ba$ is Bass, it follows from the preceding lemma that there is an element
    $b\in\Ba$ that does not belong to $\mathcal{O}_{\alge}^{[1]}$. In particular, the
    order $\Ha=\mathcal{O}_K[b]\subset K[b]$ is maximal in the latter subfield. Choosing
    a different generator of the commutative order if needed, we can assume that $b$
    is either a uniformizing parameter in a ramified quadratic extension, or a unit
    generating an unramified quadratic extension whose image on the residue field satisfies $\mathbb{K}[\bar{b}]\neq\mathbb{K}$.

    In the first case, $b_L$ is still a uniformizing parameter in a ramified quadratic
    extension $L[b_L]$ of $L$. It follows from  \cite[Prop. 4.2]{Eichler2} that
    its branch $\mathfrak{s}_L(b_L)$ is an edge. In particular, the branch $\mathfrak{s}_L(\Ba_L)$
    is contained in an edge. It follows from \cite[Th. 1.2]{A26}
    that $\mathfrak{s}_L(\Ba_L)$ is Bass.

    In the second case we have $L[b_L]\cong L\times L$, since the eigenvalues of $b$ are
    in $L$. Under this isomorphism $b_L$ corresponds to a pair of the form 
    $(\lambda,\bar{\lambda})$, where $\bar{\lambda}$ is the image of $\lambda$
    under the non-trivial $K$-automorphism of $L$. Since $\mathbb{K}[\bar{b}]\neq
    \mathbb{K}$, $\nu_L(\lambda-\bar{\lambda})=0$, which implies by 
    \cite[Prop. 4.2]{Eichler2} that $\mathfrak{s}_L(b)$ is a maximal path. Now the
    result follows as before.
\end{proof}

\begin{lemma}
    Let $L/K$ be the unique unramified quadratic extension. Then for any full order
    $\Ra\subset\alge$ we have $(\Ra^{[r]})_L=(\Ra_L)^{[r]}$. In particular, if the
    Gorenstein closure of $\Ra$ is a Bass order, the same holds for $\Ra_L$.
\end{lemma}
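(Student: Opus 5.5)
The plan is to prove the identity $(\Ra^{[r]})_L=(\Ra_L)^{[r]}$ directly from the definitions. Then I would combine it with Lemma \ref{l32} and the uniqueness of the Gorenstein closure.

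\emph{The identity.} Here $\Ra_L$ denotes the $\oink_L$-order generated by $\Ra$, that is, $\oink_L\Ra\cong\Ra\otimes_{\oink_K}\oink_L$. Since $L/K$ is unramified, $\pi_K$ is also a uniformizer of $L$, so $\pi_L^r$ may be replaced by $\pi_K^r$ in the definition of $(\Ra_L)^{[r]}$. Then
$$(\Ra^{[r]})_L=\oink_L\bigl(\oink_K\mathbf{1}+\pi_K^r\Ra\bigr)=\oink_L\mathbf{1}+\pi_K^r\oink_L\Ra=(\Ra_L)^{[r]}.$$
This is just distributivity of $\oink_L$-spans over sums of $\oink_K$-modules, together with $\oink_L\oink_K=\oink_L$. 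The only point to check is that $(\Ra_L)^{[r]}$ means $\oink_L\mathbf{1}+\pi_L^r\Ra_L$ with $\pi_L$ a uniformizer of $L$. Because $e(L/K)=1$, one may take $\pi_L=\pi_K$.

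\emph{The consequence.} Suppose the Gorenstein closure $\Ba$ of $\Ra$ is Bass. Then $\Ra=\Ba^{[n]}$ for some $n\geq0$. By the identity, $\Ra_L=(\Ba_L)^{[n]}$, and by Lemma \ref{l32}, $\Ba_L$ is a Bass order in $\alge_L$.

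It remains to see that the Gorenstein closure of $\Ra_L$ is Bass. Write $\Ba_L=\Ra_2^{[k]}$ with $\Ra_2$ the Gorenstein closure of $\Ba_L$. Since $\Ba_L$ is Bass, every order containing it is Gorenstein. But $\Ra_2\supseteq\Ra_2^{[k]}=\Ba_L$, so the definition of Bass alone does not immediately force $k=0$. The key fact is that a Bass order is itself Gorenstein (take $\Ra=\Ba_L$ in the definition). Since $\Ba_L$ is Gorenstein, it cannot be written as $\Ra_2^{[k]}$ with $k\geq1$, so $k=0$.

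Consequently $\Ra_L=(\Ba_L)^{[n]}$ with $\Ba_L$ Gorenstein. By uniqueness of the Gorenstein closure, $\Ba_L$ is the Gorenstein closure of $\Ra_L$, which is therefore Bass.

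I do not expect any real obstacle. The only subtle points are to state explicitly that $\pi_K$ serves as a uniformizer of $L$, and that Bass orders are Gorenstein, so that $n$ is the maximal exponent and the uniqueness statement from the introduction applies.
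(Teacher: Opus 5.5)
Your proposal is correct and follows the paper's proof essentially verbatim. Both use the same distributivity computation, relying on $\pi_K$ being a uniformizer of $L$, and then write $\Ra_L=(\Ba_L)^{[n]}$ with $\Ba_L$ Bass by Lemma~\ref{l32}. Your additional observation, that a Bass order is Gorenstein and so $\Ba_L$ really is the Gorenstein closure of $\Ra_L$, makes explicit what the paper compresses into ``whence the result follows.''
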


\begin{proof}
    Since $L/K$ is unramified, $\pi_K$ is also a uniformizing parameter
    for $L$. It follows that 
    $$(\Ra^{[r]})_L=(\mathcal{O}_K+\pi_K^r\Ra)_L=
    \mathcal{O}_L(\mathcal{O}_K+\pi_K^r\Ra)=
    \mathcal{O}_L+\pi_K^r(\mathcal{O}_L\Ra)=(\Ra_L)^{[r]}.$$
    For the last statement, if the Gorenstein closure $\Ra_0$
    of $\Ra$ is Bass, then $\Ra=\Ra_0^{[m]}$ for some integer
    $m\geq0$. Furthermore, $(\Ra_0)_L$ is Bass by the preceding result, and we have $\Ra_L=(\Ra_0^{[m]})_L=
    (\Ra_0)^{[m]}_L$, whence the result follows. 
\end{proof}

\begin{proposition}\label{p34}
    If the Gorenstein closure of $\Ra$ is a Bass order, then $\Ra$ is a ghost 
    intersection of maximal orders.
\end{proposition}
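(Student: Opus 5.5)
The plan is to pass to the unramified quadratic extension $L/K$, where $\alge_L\cong\mathbb{M}_2(L)$, and import the matrix-algebra result from \cite{A26}. Let $\Ra$ be a full order in $\alge$ whose Gorenstein closure is Bass. By the preceding lemma, the $\mathcal{O}_L$-order $\Ra_L$ also has a Bass Gorenstein closure. By \cite[Th. 1.2]{A26}, $\Ra_L$ is then a ghost intersection in $\alge_L$. That is, there is a finite extension $E/L$ and maximal orders $\Da_j\subset\alge_E$, $j\in J$ finite, with
$$\Ra_L=\alge_L\cap\bigcap_{j\in J}\Da_j.$$
Intersecting both sides with $\alge$ gives
$$\alge\cap\Ra_L=\alge\cap\bigcap_{j\in J}\Da_j.$$
Here we use $\alge\subset\alge_L\subset\alge_E$, so $\alge\cap\alge_L=\alge$. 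This is a ghost intersection of maximal orders of $\alge_E$, with $E/K$ finite. The statement therefore reduces to the descent identity $\alge\cap\Ra_L=\Ra$.

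To prove $\alge\cap\Ra_L=\Ra$, I use that $L/K$ is unramified of degree $2$, so $\mathcal{O}_L=\mathcal{O}_K\oplus\mathcal{O}_K\omega$ for a suitable unit $\omega$. Then $\Ra_L=\mathcal{O}_L\Ra=\Ra\oplus\omega\Ra$ as an $\mathcal{O}_K$-module inside $\alge_L=\alge\oplus\omega\alge$. An element of $\alge$ has zero $\omega$-component. Hence it lies in $\Ra_L$ exactly when it lies in $\Ra$. Equivalently, $\Ra_L=\Ra\otimes_{\mathcal{O}_K}\mathcal{O}_L$, and $\mathcal{O}_L\cap K=\mathcal{O}_K$ since $\mathcal{O}_L$ is free over $\mathcal{O}_K$ with a basis containing $1$.

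The main point needing care is that \cite[Th. 1.2]{A26} is stated over an arbitrary local base field. I will apply it with base field $L$, so the matrix algebra is $\alge_L\cong\mathbb{M}_2(L)$. The notions of Gorenstein closure and Bass order used there must be the $\mathcal{O}_L$-notions, and this is exactly what the preceding lemma and Lemma \ref{l32} provide. I also need to check that the definition of ghost intersection allows the field extension to be taken relative to $K$, with the intersection taken with $\alge$. This holds because a finite extension $E$ of $L$ is a finite extension of $K$, and $\alge_E=(\alge_L)_E$ canonically.

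If needed, I can alternatively verify the descent through the invariants $n(a,\cdot)$: by the first proposition of the previous section, two orders in $\alge$ coincide if their $n$-invariants agree for all $a\in\alge_*$. Proposition \ref{p25} with $e=1$ controls how these invariants behave under the unramified extension. The module-splitting argument above is more direct, however, so I would use it.
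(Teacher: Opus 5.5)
Your proof is correct and follows the paper's argument: you transfer the hypothesis to $\Ra_L$ via the preceding lemma, apply the matrix-algebra result of \cite{A26} over $L$ (the paper cites it as Th.~1.1 there, not Th.~1.2), and descend using $\Ra=\alge\cap\Ra_L$. The only addition is your splitting $\Ra_L=\Ra\oplus\omega\Ra$ inside $\alge_L=\alge\oplus\omega\alge$, which justifies that last identity where the paper uses it without comment.
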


\begin{proof}
    Since the Gorenstein closure of $\Ra$ is a Bass order, the same holds for $\Ra_L$
    by the preceding lemma. Since $\alge_L$ is a matrix algebra, then $\Ra_L$
    is a ghost intersection of maximal orders by \cite[Th. 1.1]{A26}. 
    This means that $\Ra_L=\alge_L\cap\bigcap_{j\in J}\Da_j$ for some collection
    $\{\Da_j|j\in J\}$ of maximal orders in $\alge_E$, for some extension $E/L$.
    Then $$\Ra=\alge\cap\Ra_L=\alge\cap\bigcap_{j\in J}\Da_j,$$
    as claimed.
\end{proof}

\begin{proposition}\label{p35}
    Write $\Ra=\Ra_1^{[r]}$, with $r>0$, for a non-Gorenstein order $\Ra$. With this notation, if $\Ra$
    is a ghost intersection of maximal orders, then
    so is $\Ra_1$.
\end{proposition}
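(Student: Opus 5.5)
Given $\Ra=\Ra_1^{[r]}=\alge\cap\bigcap_{j\in J}\Da_j$ with each $\Da_j$ maximal in $\alge_E$, the plan is to build an explicit ghost intersection equal to $\Ra_1$. We pass to a field over which $\Ra$ becomes an ordinary intersection in a matrix algebra, and then shrink the tubular neighborhood by $r$ in the tree. Enlarging $E$ if necessary, we may assume $E\supseteq L$, so $\alge_E\cong\mathbb{M}_2(E)$. Then $\mathbf{S}=\bigcap_j\Da_j$ is an intersection of maximal orders in $\alge_E$, with branch $\mathfrak{s}=\mathfrak{s}_E(\mathbf{S})$ spanned by the finite set $\{v_{\Da_j}\}$. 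Since $\Ra=\alge\cap\mathbf{S}$, Definition \ref{d1} gives $n(a,\Ra)=n(a,\Ra_1)+r$ for every $a\in\alge_*$. This holds because $K[a]\cap\Ra_1^{[r]}=(K[a]\cap\Ra_1)^{[r]}$, a fact I would verify directly from $\Ra_1^{[r]}=\oink_K+\pi_K^r\Ra_1$ together with $K\cap\Ra_1=\oink_K$.

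The next step is to replace $E$ by a further totally ramified extension $E'/E$ of large ramification index. The aim is that the shift by $r$ at the level of $K$ becomes a shift by a tree distance $er$ over $E'$, where $e=e(E'/K)$. By Proposition \ref{p25}, for each $a$ we have $e(n_1-1)+b<n\le en_1+b$, where $n_1=n(a,\Ra)$ and $n=n(a_{E'},\mathbf{S}_{E'})$. Over $E'$, the number $n(a_{E'},\mathbf{S}_{E'})$ is computed in the tree as the distance from the stem of $a_{E'}$ to the farthest vertex of $\mathfrak{s}_{E'}(\mathbf{S}_{E'})$. The candidate I would try is
$$\mathbf{S}'=\bigcap\{\Da\ \text{maximal in }\alge_{E'}\ :\ \Da\supseteq\mathbf{S}_{E'},\ d(v_\Da,\partial)\ \text{large}\},$$
that is, the intersection of the maximal orders at the vertices of the ``$er$-interior'' of the branch, obtained by removing the $er$-neighborhood of its boundary. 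A cleaner choice is $\mathfrak{s}'=$ the set of vertices $v$ with $\mathfrak{s}_{E'}(\mathbf{S}_{E'})\supseteq\{v\}^{[er]}$. We then set $\mathbf{S}'=\bigcap_{v\in\mathfrak{s}'}\Da_v$, which is a finite intersection after taking the convex hull of the extremal vertices. By \cite[Prop. 2.4]{Eichler2}, $(\mathbf{S}')^{[er]}$ has branch $\mathfrak{s}'^{[er]}\subseteq\mathfrak{s}$. The goal is then to prove
$$\Ra_1=\alge\cap\mathbf{S}'.$$

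To compare the two sides we use the criterion of the first proposition of Section 2: two full orders of $\alge$ coincide if their $n(a,\cdot)$ agree for all $a\in\alge_*$. Write $\Ra_1'=\alge\cap\mathbf{S}'$. We then compare $n(a,\Ra_1')$ with $n(a,\Ra)-r$ via Proposition \ref{p25} applied twice, once to $\mathbf{S}_{E'}$ and once to $\mathbf{S}'$. The key geometric input is that the farthest distance from the stem of $a_{E'}$ to the branch drops by exactly $er$ in passing from $\mathfrak{s}$ to $\mathfrak{s}'$. The final step is to deduce $n(a,\Ra_1')=n(a,\Ra_1)$ from the inequalities of Proposition \ref{p25}. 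The window of width $e$ in that proposition is what allows us to shift by $er$ and recover an exact shift by $r$ downstairs.

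\textbf{Main obstacle.} The difficulty is the geometric step. Shrinking a branch that is a tubular neighborhood of a path must reduce the ``stem-distance'' of every $a$ by exactly $er$, uniformly in $a$. This uses the shapes of Figure \ref{f2} and the corollary on non-backtracking paths, and it can fail near the boundary when $\mathfrak{s}$ is not itself an $er$-tubular neighborhood. I would first try to show that $\mathfrak{s}_{E'}(\mathbf{S}_{E'})$ \emph{is} an $er$-tubular neighborhood of something, by Galois descent: the branch is $\mathrm{Gal}$-stable, and $\Ra=\Ra_1^{[r]}$ forces every $a\in\alge_*$ to have stem-distance at least $er+b$. If that fails, the fallback is a direct argument with the generators $a',a''$ exactly as in the proof of Proposition \ref{p25}.
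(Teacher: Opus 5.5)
There is a genuine gap, and it is the geometric step you flag yourself. You need $\mathfrak{s}_{E'}(\mathbf{S}_{E'})$ to be an $er$-tubular neighborhood, and nothing forces this, even when $\mathbf{S}$ is Galois-stable. (An arbitrary representation need not be Galois-stable in the first place.) So your $\mathfrak{s}'$ can be empty.

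Here is a counterexample. Take $\Ra=\mathcal{O}_{\alge}^{[1]}$, so $\Ra_1=\mathcal{O}_{\alge}$ and $r=1$. Let $E\supseteq L$ be Galois over $K$ with $e=e(E/K)\geq 2$.
\begin{itemize}
\item Comparing discriminants, $(\mathcal{O}_{\alge})_L$ is the Eichler order $\Da\cap\Da'$ of Example~\ref{e1}. Hence $(\mathcal{O}_{\alge})_E$ is an Eichler order whose branch is a path $\mathfrak{q}$ of length $e$.
\item Set $\mathbf{S}=\mathcal{O}_E+\pi_E(\mathcal{O}_{\alge})_E$. Its Gorenstein closure is Eichler, so $\mathbf{S}$ is an intersection of maximal orders; it is Galois-stable, and its branch is $\mathfrak{q}^{[1]}$.
\item Write $\mathcal{O}_{\alge}=\mathcal{O}_K\mathbf{1}\oplus N$. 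Then $\mathbf{S}=\mathcal{O}_E\oplus\pi_E N_E$, so $\alge\cap\mathbf{S}=\mathcal{O}_K\mathbf{1}\oplus(\pi_E\mathcal{O}_E\cap K)N=\mathcal{O}_{\alge}^{[1]}$.
\end{itemize}
This is a legitimate ghost representation of $\Ra$. Yet no vertex $v$ satisfies $\{v\}^{[e]}\subseteq\mathfrak{q}^{[1]}$, so your $\mathbf{S}'$ is an empty intersection.

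Passing to $E'/E$ with $e(E'/E)=f$ does not help. The branch becomes the $f$-tubular neighborhood of a path of length $ef$, while you would need thickness $ef$.

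The inequality behind your proposed rescue is also wrong. With $n_1\geq r$, Proposition~\ref{p25} only gives $n\geq e(r-1)+b+1$, not $n\geq er+b$. The width-$e$ window is exactly what lets the branch be thinner than $er$. Your shrinking argument works when the branch is $\mathfrak{q}^{[t]}$ with $t\geq er$, but that hypothesis is not available.

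The missing idea is a construction anchored at a fixed center rather than at the shape of $\mathfrak{s}(\mathbf{S})$. The paper proceeds as follows.
\begin{itemize}
\item Reduce to $r=1$ by induction.
\item Fix a maximal order $\Da\supseteq\mathcal{O}_{\alge}$ in $\alge_E$. It satisfies $n(a_E,\Da)\leq b(a)$ for all $a\in\alge_*$ simultaneously.
\item Replace each $\Da_j$ by the maximal order $e$ steps closer to $v_{\Da}$ along the geodesic, or by $\Da$ itself when $d(v_{\Da},v_{\Da_j})\leq e$.
\end{itemize}
Along a geodesic leaving $v_{\Da}$, the distance to a stem, once it exceeds $b(a)$, grows by one at every further step. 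So for such $j$, $n(a_E,\cdot)$ drops by exactly $e$, and by Proposition~\ref{p25} $n(a,\alge\cap\cdot)$ drops by exactly $1$. The remaining $j$ contribute at most $1$ to $n(a,\alge\cap\Da_j)$. This is harmless, because $\Ra\subseteq\mathcal{O}_{\alge}^{[1]}$ forces $n(a,\Ra)\geq 1$ for every $a$.

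In the example above, take $\Da$ to be an endpoint of $\mathfrak{q}$. Then every new maximal order lies on $\mathfrak{q}$, and the new ghost intersection is $\mathcal{O}_{\alge}$, as it should be.
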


\begin{proof}
    By induction, it suffices to assume $r=1$.  Note that 
    $\Ra\subseteq\mathcal{O}_{\alge}^{[1]}$, whence,
    $n(a,\Ra)\geq 1$ for any $a\in\alge_*=
    \alge\smallsetminus K$. Write 
    $\Ra=\alge\cap\mathbf{S}$, where $\mathbf{S}=\bigcap_{j\in J}\Da_j$
    is a full order in $\alge_E$ that can be written as the intersection of some
    collection $\{\Da_j\}_{j\in J}$ of maximal orders in $\alge_E$ 
    for some finite 
    extension $E/K$.  Fix a maximal order $\Da$ in $\alge_E$ containing 
    $\mathcal{O}_{\alge}$. Then $n(a_E,\mathbf{D})\leq b(a)$,
    if $b(a)=b(K\big(a)\big)$ as in Prop. \ref{p25}, for any 
    $a\in\alge_*$. Let $e=e(E/K)$. For every order $\Da_j$ 
    in the collection we choose a maximal order $\Da'_j$ as follows: 
    \begin{itemize} \item If the distance
    in the Bruhat-Tits tree of $\alge_E$ between the vertices corresponding to
    $\Da_j$ and $\Da$ is $e$ or less, we set $\Da'_j=\Da$,
    \item otherwise we choose $\Da'_j$ as the maximal order corresponding to a vertex
    in the path between $v_{\Da}$ and $v_{\Da_j}$ at distance $e$ from
    $v_{\Da_j}$. \end{itemize}
    Let $\mathbf{S}'=\bigcap_{j\in J}\Da'_j$.
    It suffices to show that $n(a,\alge\cap\mathbf{S})=n(a,\alge\cap\mathbf{S}')+1$
    for every $a\in\alge_*$. To prove this we note that 
    $n(a_E,\mathbf{D})\leq b(a)$. For any $j\in J$, we have two cases:
    \begin{itemize} \item If $n(a_E,\mathbf{D}'_j)\leq b(a)$, then
    $n(a_E,\Da_j)\leq b(a)+e$ since $d(v_{\Da},v_{\Da_j})\leq e$.
    In this case $n(a,\alge\cap\Da_j)\leq 1$ and $n(a,\alge\cap\Da'_j)=0$.
    \item If $n(a_E,\mathbf{D}'_j)> b(a)\geq0$, then $\mathbf{D}'_j\neq
    \mathbf{D}$, whence $d(v_{\Da'_j},v_{\Da_j})=e$. Then $n(a_E,\Da_j)=
    n(a_E,\Da'_j)+e$. This implies $n(a,\alge\cap\Da_j)=
    n(a,\alge\cap\Da'_j)+1$.
    \end{itemize}
    We conclude that $n(a,\alge\cap\Da_j)=n(a,\alge\cap\Da'_j)+1$ unless
    $n(a,\alge\cap\Da_j)=n(a,\alge\cap\Da'_j)=0$. The condition $n(a,\Ra)\geq 1$
    show that these values of $j$ are not relevant to compute $n(a,\Ra)=
    \max_{j\in J}n(a,\alge\cap\Da_j)$, and they can be ignored. The result follows.      
\end{proof}

\begin{proposition}\label{p36}
    Let $\Ra$ be a Gorenstein order that is a ghost intersection of maximal orders. Then 
    $\Ra$ is a Bass order.
\end{proposition}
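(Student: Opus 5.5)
We argue by contradiction, using the Bass criterion from the first lemma of this section. Suppose $\Ra$ is Gorenstein and $\Ra=\alge\cap\mathbf{S}$, where $\mathbf{S}=\bigcap_{j\in J}\Da_j$ is a finite intersection of maximal orders in $\alge_E$ for some finite extension $E/K$. If $\Ra$ were not Bass, that lemma would give $\Ra\subseteq\mathcal{O}_{\alge}^{[1]}$, so $n(a,\Ra)\geq1$ for every $a\in\alge_*$. The goal is to deduce that $\Ra=\Ra_1^{[1]}$ for some full order $\Ra_1$, which contradicts the assumption that $\Ra$ is Gorenstein.

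To build $\Ra_1$ we reuse the construction in the proof of Prop.~\ref{p35} verbatim. Its first step, after the reduction to $r=1$, uses only the hypothesis $\Ra\subseteq\mathcal{O}_{\alge}^{[1]}$; it never uses that $\Ra$ already has the form $\Ra_1^{[1]}$. We fix a maximal order $\Da\supseteq\mathcal{O}_{\alge}$ in $\alge_E$ and let $e=e(E/K)$. Each $\Da_j$ is replaced by $\Da'_j$, the vertex on the path from $v_{\Da_j}$ to $v_{\Da}$ at distance $e$ from $v_{\Da_j}$, or by $\Da$ itself if $v_{\Da_j}$ is already within distance $e$ of $v_\Da$. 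Set $\mathbf{S}'=\bigcap_j\Da'_j$ and $\Ra_1=\alge\cap\mathbf{S}'$. Since $\mathbf{S}'$ is a full $\mathcal{O}_E$-order, $\Ra_1$ is a full order in $\alge$. The case analysis there, which relies on Prop.~\ref{p25} and the monotonicity of $n(\mathfrak{L},\cdot)$ along non-backtracking paths from the corollary above, gives
\[
n(a,\Ra)=n(a,\Ra_1)+1\qquad\text{for all } a\in\alge_*,
\]
where the hypothesis $n(a,\Ra)\geq 1$ discards the indices $j$ with $n(a,\alge\cap\Da_j)=0$.

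Next we compare $\Ra$ with $\Ra_1^{[1]}$. For $a\in\alge_*$ with $K[a]=\mathfrak{L}$, put $n_1=n(a,\Ra_1)$, so that $\mathfrak{L}\cap\Ra_1=\mathcal{O}_{\mathfrak{L}}^{[n_1]}$. Then
\[
\mathfrak{L}\cap\Ra_1^{[1]}=\mathcal{O}_K+\pi_K\,\mathcal{O}_{\mathfrak{L}}^{[n_1]}=\mathcal{O}_{\mathfrak{L}}^{[n_1+1]}.
\]
The first equality holds because intersecting with $\mathfrak{L}$ commutes with forming $\mathcal{O}_K+\pi_K(\cdot)$: write an element as $c+\pi_K x$ with $c\in\mathcal{O}_K$ and $x\in\Ra_1$, and note that $x\in\mathfrak{L}$ whenever $c+\pi_Kx\in\mathfrak{L}$. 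Hence $n(a,\Ra_1^{[1]})=n(a,\Ra)$ for every $a\in\alge_*$, and the proposition at the start of Section~2 (equal invariants $n(a,\cdot)$ for all $a$ imply equal orders) gives $\Ra=\Ra_1^{[1]}$. Since $\Ra$ is Gorenstein, this is impossible, so $\Ra$ is Bass.

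The main obstacle is checking that the argument of Prop.~\ref{p35} really needs only $\Ra\subseteq\mathcal{O}_{\alge}^{[1]}$. In particular:
\begin{itemize}
\item the boundary case $d(v_{\Da},v_{\Da_j})\le e$ must still yield $n(a,\alge\cap\Da_j)\le1$;
\item the inequality $e(n_1-1)+b<n\le en_1+b$ from Prop.~\ref{p25} must convert a shift of exactly $e$ in $n(a_E,\cdot)$ into a shift of exactly $1$ in $n(a,\cdot)$.
\end{itemize}
Should that shift prove delicate, the fallback is to use the tubular neighborhood description: the $E$-branch of $\Ra_1^{[1]}$ is the $e$-tubular neighborhood of the branch of $(\Ra_1)_E$. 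With this, one would check directly that $\mathbf{S}\supseteq(\Ra_1^{[1]})_E$ and conversely, thereby obtaining both inclusions between $\Ra$ and $\Ra_1^{[1]}$.
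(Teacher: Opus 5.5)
Your proposal is correct and is essentially the paper's proof. Assuming $\Ra$ is not Bass gives $\Ra\subseteq\mathcal{O}_{\alge}^{[1]}$; the orders $\Da'_j$ from the proof of Prop.~\ref{p35} then yield $n(a,\Ra)=n(a,\alge\cap\mathbf{S}')+1$ for all $a\in\alge_*$, so $\Ra=(\alge\cap\mathbf{S}')^{[1]}$, contradicting the Gorenstein hypothesis. Your check that $\mathfrak{L}\cap\Ra_1^{[1]}=\mathcal{O}_{\mathfrak{L}}^{[n_1+1]}$, together with the fact that the invariants $n(a,\cdot)$ determine a full order, just spells out the paper's ``it follows that.'' The two points you flag do go through as in Prop.~\ref{p35}: writing $n_1=\max\{0,\lceil (n-b)/e\rceil\}$ shows a shift of $e$ gives a shift of exactly $1$ once $n>b$, so the fallback is unnecessary.
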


\begin{proof}
    If $\Ra$ is not a Bass order, then $\Ra\subseteq\mathcal{O}_{\alge}^{[1]}$. 
    Write $\Ra=\alge\cap\mathbf{S}$, where $\mathbf{S}=\bigcap_{j\in J}\Da_j$, 
    and define $\mathbf{S}'$ as in the preceding proof. Then the same 
    computation as before shows $n(a,\Ra)=n(a,\alge\cap\mathbf{S}')+1$ for 
    all $a\in\alge_*$. It follows that 
    $\Ra_1=\alge\cap\mathbf{S}'$ satisfies $\Ra=\Ra_1^{[1]}$. This 
    contradicts the fact that $\Ra$ is Gorenstein, concluding the proof.
\end{proof}

\begin{proof}[Proof of Theorem~\ref{t1}]
    If the Gorenstein closure of $\Ra$ is Bass, it follows from Prop.
    \ref{p34} that $\Ra$ is a ghost intersection of maximal orders. 
    On the other hand, if $\Ra$ is a ghost intersection
    of maximal orders, so is its Gorenstein closure
    $\Ra'$ by Prop. \ref{p35}. Now Prop. \ref{p36} shows that $\Ra'$
    is Bass.
\end{proof}

\section{Examples}

In all of this section, by a module we mean a left module, and by ideal
in a ring we mean a left ideal unless otherwise stated.
Next result is used throughout the examples:

\begin{lemma}
 Let R be a ring, and let $\Lambda_0 \subset \Lambda_1$ be two R-modules 
 satisfying $\Lambda_1/\Lambda_0\cong R$ as $R$-modules. Then every 
 $R$-submodule of $\Lambda_1$ containing $\Lambda_0$ equals $I\Lambda_1 
 + \Lambda_0$ for some ideal $I$ of $R$. In particular, if
$R$ is a principal ideal domain, then every $R$-submodule of $\Lambda_1$ 
containing $\Lambda_0$ equals $a\Lambda_1 + \Lambda_0$, for some
element $a\in R$.    
\end{lemma}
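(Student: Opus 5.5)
The plan is to prove the lemma via the correspondence theorem for modules, and then specialize to principal ideal domains.

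First, I fix an isomorphism $\phi:\Lambda_1/\Lambda_0\to R$ of left $R$-modules. I choose $x\in\Lambda_1$ whose class maps to $1$ under $\phi$. Then every element of $\Lambda_1$ can be written as $rx+\lambda_0$ with $r\in R$ and $\lambda_0\in\Lambda_0$. In other words, $\Lambda_1=Rx+\Lambda_0$.

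Next, let $M$ be an $R$-submodule with $\Lambda_0\subseteq M\subseteq\Lambda_1$. By the correspondence theorem, $M$ is determined by its image $M/\Lambda_0\subseteq\Lambda_1/\Lambda_0$. Under $\phi$, this image becomes an $R$-submodule of $R$, that is, a left ideal $I$. So $M=\{rx+\lambda_0 : r\in I,\ \lambda_0\in\Lambda_0\}=Ix+\Lambda_0$.

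The remaining point is to replace $Ix$ by $I\Lambda_1$, as the statement requires. We have $I\Lambda_1=I(Rx+\Lambda_0)=IRx+I\Lambda_0$. Since $I$ is a left ideal, $IR\supseteq I$ (as $R$ has $1$), and $I\Lambda_0\subseteq\Lambda_0$. Hence $I\Lambda_1+\Lambda_0=IRx+\Lambda_0$, which contains $Ix+\Lambda_0=M$.

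This is the main obstacle: for a merely left ideal, $IRx$ could be larger than $Ix$. In the commutative case, which is the one needed (the rings used are $\mathcal{O}_K$ or $\mathcal{O}_L$), we have $IR=I$, so equality holds and $M=I\Lambda_1+\Lambda_0$. If full generality were wanted, I would instead take $I$ to be the two-sided ideal generated by the image. Alternatively, I would read the statement under the standing commutativity implicit in the examples.

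Finally, if $R$ is a principal ideal domain, then $I=aR$ for some $a\in R$, and the formula becomes $M=a\Lambda_1+\Lambda_0$.
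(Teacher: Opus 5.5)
Your argument is correct and is essentially the paper's. Both rest on the correspondence theorem, and your generator $x$ just unpacks the paper's identity $I(\Lambda_1/\Lambda_0)=I\Lambda_1+\Lambda_0$. Doing so exposes the step $IR=I$, which the paper's one-line proof uses without comment.

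Your caveat is justified, but drop the proposed fix of taking the two-sided ideal generated by the image. That gives a module strictly larger than $M$ whenever the image is not already two-sided. In fact the statement itself fails for noncommutative $R$. Take $R=\mathbb{M}_2(K)$, $\Lambda_0=0$, $\Lambda_1=R$, and let $M$ be the left ideal of matrices with vanishing second column. Then $I\Lambda_1+\Lambda_0=IR$ is always a right ideal, while $M$ is not, so no choice of $I$ works.

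So the commutative reading is the right one. It covers both uses of the lemma, where $R=\mathcal{O}_{\mathfrak{L}}$ in Example~\ref{e1} and $R=\mathcal{O}_K[\mathbf{i}]$ in Example~\ref{e2}.
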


\begin{proof}
    It is immediate from the correspondence theorem that all the considered  
    submodules have the form $\Lambda_2/\Lambda_0$, where 
    $\Lambda_0\subseteq\Lambda_2\subseteq\Lambda_1$. Since the 
    $R$-submodules of $R$ are precisely the ideals, the result follows from 
    the identity $I(\Lambda_1/\Lambda_0)=I\Lambda_1+\Lambda_0$.
\end{proof}

\begin{example}\label{e1}
    Let $L/K$ be an unramified quadratic extension. Let $\alge$ be a 
    quaternion division algebra, as before, so that $\mathcal{O}_\alge=
    \mathcal{O}_K[\mathbf{i},\mathbf{w}]$, where 
    $\mathbf{i}^2=\pi_K\mathbf{1}$, and
    $\mathbf{w}$ is a generator of the ring of integers of an unramified
    field $\mathfrak{L}\subseteq\alge$, on which conjugation by $i$
    induces the nontrivial element in the Galois group 
    $\mathrm{Gal}(\mathfrak{L}/K)$.
    Then $L\cong\mathfrak{L}$ as $K$-algebras, but we use different
    letter type to emphasize the particular roles each plays in the 
    computation. In fact, if $\omega\in L$ is the image of $\mathbf{w}$
    under a suitable isomorphism, then the algebra $\mathfrak{L}_L
    =L\otimes_K\mathfrak{L}$ has the idempotent
    $$\frac{1}{\bar{\omega}-\omega}\otimes\mathbf{w}-
    \frac{\omega}{\bar{\omega}-\omega}\otimes\mathbf{1},$$
    where $\bar{\omega}$ is the conjugate of $\omega$. It follows that
    $\mathfrak{L}_L$ is isomorphic to $L\times L$, and we can assume that
    the isomorphism takes $\mathbf{w}_L=1\otimes\mathbf{w}$ to $(\omega,\bar{\omega})$.
     \begin{figure}
    \centering
    \setlength{\unitlength}{2mm}
    \begin{picture}(70,5)(0,-3)
        \put(59.6,-0.1){$\dots$}
        \put(62.6,-0.1){$\dots$}
        \put(65.6,-0.3){$\star$}
        \put(28,0){\line(1,0){30}}
        \put(4.9,-0.1){$\dots$}
        \put(7.6,-0.1){$\dots$}
        \put(3.7,-0.3){$\star$}
        \put(22,-0.1){$\dots$}
        \put(24.8,-0.1){$\dots$}
        \put(11,0){\line(1,0){10}}
        \put(10.5,-0.5){$\bullet$}
        \put(15.7,-0.5){$\bullet$}
        \put(20.9,-0.5){$\bullet$}
        \put(27.2,-0.5){$\bullet$}
        \put(32.3,-0.5){$\bullet$}
        \put(37.4,-0.5){$\bullet$}
        \put(42.5,-0.5){$\bullet$}
        \put(47.6,-0.5){$\bullet$}
        \put(52.7,-0.5){$\bullet$}
        \put(57.8,-0.5){$\bullet$}
        \put(52.7,1){${}_{\Da'}$}
        \put(46.7,1){${}_{\Da=\Da_0}$}
        \put(42.5,1){${}_{\Da_1}$}
        \put(37.4,1){${}_{\Da_2}$}
        \put(32.3,1){${}_{\Da_3}$}
        \put(27.2,1){${}_{\Da_4}$}
        \put(20.4,1){${}_{\Da_{n-1}}$}
        \put(15.3,1){${}_{\Da_n}$}
        \put(10.2,1){${}_{\Da_{n+1}}$}
    \end{picture}
    \caption{The maximal path whose vertices correspond to maximal orders
    containing the matrix $\mathbf{w}_L$ in Ex. \ref{e1}.}
    \label{f2}
\end{figure}
    This implies that $\omega$ and $\bar{\omega}$ are the 
    eigenvalues of the matrix $\mathbf{w}_L$, and since they have 
    different images in the residue
    field, $\mathbf{w}_L$ is 
    contained in the maximal orders
    in an infinite path by 
    \cite[Prop. 4.2]{Eichler2}. By a similar argument,
    the matrix $\mathbf{i}_L$, 
    which generates the ring of integers
    of a ramified extension, is contained 
    in two maximal orders, and 
    conjugation by $\mathbf{i}_L$ transposes them. As conjugation by
    $\mathbf{i}_L$ also transposes the eigenvalues of 
    $\mathbf{w}_L$, and therefore the visual limits of the 
    corresponding path, it follows that the maximal orders containing
    $\mathbf{i}_L$ are the two maximal orders, $\Da$ and $\Da'$ 
    that are closest to the unique $\mathbf{i}_L$-invariant point 
    in the path, which is the barycenter of an edge, see Fig.\ref{f2}.
    
    Now, for each $n\geq 0$, let $\Da_n$ be a maximal order in $\alge_L$
    whose corresponding vertex lies on the path at a distance of $n$ from 
    the vertex $\Da=\Da_0$, and at distance $n+1$ from $\Da'$,
    as in Fig. \ref{f2}. The
    order $\Da_n$ contains $\pi_K^n\mathbf{i}_L$, but not
    $\pi_K^{n-1}\mathbf{i}_L$. Since the orders of the form
    $\Ba_n=\mathcal{O}_K[\pi_K^n\mathbf{i},\mathbf{w}]$ are the only ones 
    containing $\mathcal{O}_{\mathfrak{L}}=\mathcal{O}_K[\mathbf{w}]$,
    by the preceding lemma, we have $\Ba_n=\alge\cap\Da_n$.
\end{example}

\begin{example}\label{e2}
    Now assume the characteristic of $K$ is not $2$, and let
    $F=K[\sqrt{\pi_K}]$, which is a ramified quadratic extension.  As 
    before, $\alge$ is a division algebra, while $\alge_F$ is a matrix
    algebra where $$\mathtt{p}=\frac{1}{2\sqrt{\pi_K}}\otimes\mathbf{i}-
    \frac12\otimes\mathbf{1},$$ is an idempotent. In other words, 
    the algebra $\mathfrak{F}=K[\mathbf{i}]$ is a field isomorphic to $F$,
    while $\mathfrak{F}_F\cong F\times F$. Furthermore,
    $\mathcal{O}_F[\mathbf{w}_F]$ is the ring of integers
    of an unramified extension, whence it is contained in a unique
    maximal order $\Da$. The same holds, therefore, for 
    $(\mathcal{O}_\alge)_F$. Now the maximal orders containing 
    $\mathtt{p}$ lie in a maximal path $\mathfrak{p}$
    see Fig. \ref{f3nn}.
    As $\mathbf{i}'=\frac{1}{\sqrt{\pi_K}}\otimes\mathbf{i}=2\mathtt{p}+
    \mathbf{1}_L$ and $\mathbf{w}_L$ generate a cyclic
    algebra (and therefore a matrix algebra) over the residue field,
    they must generate a maximal order $\Da$. We conclude that $\Da$ 
    lies at a distance of $\nu_F(2)=2\nu_K(2)$ from $\mathfrak{p}$. Let 
    $\Da'$ be the order of the line that is closest to $\Da$. Note that 
    $\Da=\Da'$ when $K$ is not dyadic. Let $\Da_n$ be an order at distance 
    $n$ from $\Da$, either on the path from $\Da$ to $\Da'$ (when 
    $n\leq\nu_F(2)$), or in the path $\mathfrak{p}$ when $n\geq\nu_F(2)$).
    The order $\Da_n$ contains $\sqrt{\pi_K}^n\mathbf{w}_L$
    but no $\sqrt{\pi_K}^{n-1}\mathbf{w}_L$. 
    However, all of them contain  $\mathbf{i}'$, which is a unit. We can 
    rephrase this as follows:
    \begin{itemize}
    \item When $n=2t$ then $\Da_n$ contains
    $\pi_K^t\mathbf{w}_L=
    \mathbf{i}_L^n\mathbf{w}_L$,
    but not $\pi_K^{t-1}\mathbf{i}_L\mathbf{w}_L=
    \mathbf{i}_L^{n-1}\mathbf{w}_L$. 
    \item When $n=2t+1$ then $\Da_n$ contains 
    $\pi_K^t\mathbf{i}_L\mathbf{w}_L=
    \mathbf{i}_L^n\mathbf{w}_L$, 
    but not $\pi_K^t\mathbf{w}_L=\mathbf{i}_L^{n-1}\mathbf{w}_L$. 
     \end{itemize}
    We conclude as before that $D_n\cap\alge=\mathcal{O}_K[\mathbf{i}^n\mathbf{w},\mathbf{i}]$,
    since all maximal orders containing $\mathbf{i}$ have this form.
     \begin{figure}
    \centering
    \setlength{\unitlength}{2mm}
    \begin{picture}(50,15)(20,-12)
        \put(59.6,-0.1){$\dots$}
        \put(62.6,-0.1){$\dots$}
        \put(65.6,-0.3){$\star$}
        \put(28,0){\line(1,0){30}}
        \put(20.7,-0.3){$\star$}
        \put(22,-0.1){$\dots$}
        \put(24.8,-0.1){$\dots$}
        \put(27.2,-0.5){$\bullet$}
        \put(32.3,-0.5){$\bullet$}
        \put(37.4,-0.5){$\bullet$}
        \put(42.5,-0.5){$\bullet$}
        \put(47.6,-0.5){$\bullet$}
        \put(52.7,-0.5){$\bullet$}
        \put(57.8,-0.5){$\bullet$}
        \put(46.7,1){${}_{\Da'=\Da_\nu}$}
        \put(42.5,1){${}_{\Da_{\nu+1}}$}
        \put(37.4,1){${}_{\Da_{\nu+2}}$}
        \put(32.3,1){${}_{\Da_{\nu+3}}$}
        \put(27.2,1){${}_{\Da_{\nu+4}}$}
        \put(48,-2){\line(0,1){2}}
        \put(48,-10){\line(0,1){4.8}}
        \put(47.8,-4.5){$\vdots$}
        \put(47.6,-2.1){$\bullet$}
        \put(47.6,-5.7){$\bullet$}
        \put(47.6,-7.9){$\bullet$}
        \put(47.6,-10.1){$\bullet$}
        \put(49,-2){${}_{\Da_{\nu-1}}$}
        \put(49,-5.7){${}_{\Da_{2}}$}
        \put(49,-7.9){${}_{\Da_{1}}$}
        \put(49,-10.1){${}_{\Da=\Da_{0}}$}
    \end{picture}
    \caption{The maximal orders in Ex. \ref{e2}.}
    \label{f3nn}
\end{figure}
\end{example}



  Luis Arenas-Carmona\\
Universidad de Chile,\\ Facultad de Ciencias, \\Casilla 653, Santiago,\\
   Chile \\
   \email{learenas@u.uchile.cl}
\end{document}